\pgfplotsset{width=10cm,compat=1.9}
\def\@setauthors{%
  \begingroup
  \def\thanks{\protect\thanks@warning}%
  \trivlist
  \centering\footnotesize \@topsep30\p@\relax
  \advance\@topsep by -\baselineskip
  \item\relax
  \author@andify\authors
  \def\\{\protect\linebreak}

  \normalsize\lowercase{\authors}%
  
	\ifx\@empty\contribs
  \else
    ,\penalty-3 \space \@setcontribs
    \@closetoccontribs
  \fi
  \endtrivlist
  \endgroup
}
\def\@settitle{\begin{center}
\LARGE\lowercase{\@title}
  \end{center}%
}
\newcommand{\authoremail}[1]{\email{\href{mailto:#1}{\color{lightblue}{#1}}}}
\newcommand{\authoraddress}[1]{\address{\normalfont{#1}}}
\numberwithin{equation}{section}
\newtheorem{thm}{Theorem}[section]
\newtheorem{lma}[thm]{Lemma}
\newtheorem{cor}[thm]{Corollary}
\newtheorem{prop}[thm]{Proposition}
\newtheorem{ques}[thm]{Question}
\DeclareMathOperator*\lowlim{\liminf}
\DeclareMathOperator*\uplim{\limsup}
\theoremstyle{definition}
\theoremstyle{remark}
\renewcommand{\epsilon}{\varepsilon}
\newcommand{\R}{\mathbb{R}}
\renewcommand{\geq}{\geqslant}
\renewcommand{\leq}{\leqslant}
\newcommand{\ubd}{\overline{\dim}_{\textup{B}}}
\newcommand{\lbd}{\underline{\dim}_{\textup{B}}}
\newcommand{\ad}{\dim_{\mathrm{A}} }
\newcommand{\qad}{\dim_{\mathrm{qA}} }
\newcommand{\as}{\dim^\theta_{\mathrm{A}} }
\newcommand{\hd}{\dim_{\mathrm{H}}  }
\newcommand{\bd}{\dim_{\mathrm{B}}  }
\newcommand{\rn}{\mathbb{R}^d}
\newcommand{\uid}{\overline{\dim}_{\,\theta}}
\newcommand{\lid}{\underline{\dim}_{\,\theta}}
\newcommand{\be}{\begin{equation}}
\newcommand{\ee}{\end{equation}}
\renewcommand{\epsilon}{\varepsilon}
\newcommand{\rd}{\mathbb{R}^d}
\newcommand{\fs}{\dim^\theta_{\mathrm{F}}}
\newcommand{\fd}{\dim_{\mathrm{F}}}
\newcommand{\sd}{\dim_{\mathrm{S}}}
\newcommand{\gdk}{G(d,k)}
\newcommand{\gto}{G(2,1)}
\newcommand{\gamdk}{\gamma_{d,k}}
\newcommand{\rk}{\mathbb{R}^k}
\newcommand{\N}{\mathbb{N}}
\renewcommand{\L}{\mathcal{L}}
\newcommand{\M}{\mathcal{M}}
\renewcommand{\S}{\mathcal{S}}
\newcommand{\spt}{\text{spt}\,}
\newcommand{\J}{\mathcal{J}}
\DeclareRobustCommand\widecheck[1]{{\mathpalette\@widecheck{#1}}}
\def\@widecheck#1#2{%
    \setbox\z@\hbox{\m@th$#1#2$}%
    \setbox\tw@\hbox{\m@th$#1%
       \widehat{%
          \vrule\@width\z@\@height\ht\z@
          \vrule\@height\z@\@width\wd\z@}$}%
    \dp\tw@-\ht\z@
    \@tempdima\ht\z@ \advance\@tempdima2\ht\tw@ \divide\@tempdima\thr@@
    \setbox\tw@\hbox{%
       \raise\@tempdima\hbox{\scalebox{1}[-1]{\lower\@tempdima\box
\tw@}}}%
    {\ooalign{\box\tw@ \cr \box\z@}}}
\newcommand\reallywidehat[1]{%
\savestack{\tmpbox}{\stretchto{%
  \scaleto{%
    \scalerel*[\widthof{\ensuremath{#1}}]{\kern.1pt\mathchar"0362\kern.1pt}%
    {\rule{0ex}{\textheight}}
  }{\textheight}%
}{2.4ex}}%
\stackon[-6.9pt]{#1}{\tmpbox}%
}
\definecolor{lightblue}{HTML}{2B77A4}
\colorlet{plotblue}{LightSkyBlue3!80}
\definecolor{darkred}{HTML}{9E0D0D}
\definecolor{darkyellow}{HTML}{b3b300}
\definecolor{darkorange}{HTML}{D86129}
\title{Applications of dimension interpolation to orthogonal projections}
\author{Jonathan M. Fraser}
\thanks{JMF was financially supported by  a  \emph{Leverhulme Trust Research Project Grant} (RPG-2023-281) and an \emph{EPSRC Standard Grant} (EP/Y029550/1).}
\begin{document}
\maketitle
\thispagestyle{empty}

\begin{abstract}
 Dimension interpolation is a novel programme of research which attempts to unify the study of fractal dimension by considering various spectra which live in between well-studied notions of dimension such as Hausdorff, box, Assouad and Fourier dimension.  These  spectra often reveal novel features not witnessed by the individual notions and this information has applications in many directions.  In this survey article, we discuss dimension interpolation broadly and then focus on applications to the dimension theory of orthogonal projections. We focus on three distinct applications coming from three different dimension spectra, namely, the Fourier spectrum, the intermediate dimensions, and the Assouad spectrum.  The celebrated Marstrand--Mattila projection theorem gives the Hausdorff dimension of the orthogonal projection of a Borel set in Euclidean space for almost all orthogonal projections.  This result has inspired much further research on the dimension theory of projections including the consideration of dimensions other than the Hausdorff dimension, and the study of the exceptional set in the Marstrand--Mattila   theorem.  \\ \\
  \emph{Mathematics Subject Classification}: primary: 28A80, 42B10; secondary: 28A75, 28A78.
\\
\emph{Key words and phrases}: dimension interpolation, projections, Marstrand's  Theorem, Hausdorff dimension, box dimension, Assouad dimension, Fourier dimension, Assouad spectrum, intermediate dimensions,  Fourier spectrum.
\end{abstract}

\tableofcontents

\section{Dimension theory} \label{intro}

One of the most basic questions one can ask about a geometric object is: how big is it? Indeed, children are quickly taught about the length of a line, the area of a circle, and the volume of a sphere when they embark upon their mathematical journeys. However, in highlighting these simple examples, we have already skipped over an even more fundamental question.  Before we measure the length of a line,  the area of a circle, and the volume of a sphere, we must first identify  that these objects are respectively 1, 2, and 3 dimensional and that it is therefore  appropriate to measure their size using 1-dimensional length, 2-dimensional area, and 3-dimensional volume. This is also the case in more advanced geometry, where asking for the dimension of a (potentially fractal) set is one of the most fundamental questions.  However, a perhaps even more fundamental question is to ask how to \emph{define} `dimension' in a robust and useful way.  It turns out that there are many ways to do this, each notion picking up a different feature of the underlying object.  Understanding the subtle relationships and differences between distinct notions of dimension is one of the joys of  fractal geometry and dimension theory.  This survey article will focus on four distinct notions of fractal dimension: the Hausdorff, box, Assouad and Fourier dimensions.  In this section we briefly define all four and give some of their basic connections.   For more background on box and Hausdorff dimensions, see \cite{Fal03}; for more on the Assouad dimension, see \cite{jon:book, mackaytyson,robinson}; and for more on the Fourier dimension, see \cite{Mat15}.

 For a non-empty bounded  set $X\subseteq \rn$ and a scale $r>0$,  let $N_r(X)$ be the minimum number of sets of diameter $r$ that can cover $X$.  Thus $N_r(X)$ is a simple measure of how large $X$ is at scale $r$.  The {\em lower} and {\em upper box dimensions}  of $X$ are defined by
\[
\lbd X\ =\ \varliminf_{r\to 0} \frac{\log  N_r(X)}{-\log r}
\quad \mbox{ and }\quad  \ubd X\ = \ \varlimsup_{r\to 0} \frac{\log  N_r(X)}{-\log r}
\]
and these capture the growth rate of $N_r(X)$ as the scale $r$ shrinks.  If $\lbd X = \ubd X$ then we write $\bd X$ for the common value and call it simply the \emph{box dimension} of $X$.  It is sometimes useful to note that  $N_r(X)$ can be replaced with other related ways of measuring the coarse size of $X$ at scale $r$, such as the maximal size of an $r$-separated subset etc, without changing the dimensions; see \cite{Fal03}.   A more sophisticated notion, which is similar in spirit, is the Hausdorff dimension.  Here and throughout $|X|$ denotes the diameter of a set $X \subseteq  \rd$.  The \emph{Hausdorff dimension} of $X$ can be defined by
\begin{eqnarray*}
\hd  X\ = \  \inf \bigg\{ \  \alpha>0 &:&  \text{for all $\varepsilon>0$ there exists a cover $\{U_i\}$ of $X$} \\
&\,& \hspace{30mm}  \text{ such that  $\sum_i |U_i|^\alpha < \varepsilon$ } \bigg\}.
\end{eqnarray*}
The key difference here is that sets with vastly different diameters  are permitted in the cover and their contribution to the `dimension' is weighted according to their diameter. 

 Both the Hausdorff and box dimensions measure the size of the whole set, giving rise to an `average global dimension'. It is often the case that more extremal or local information is required, for example in embedding theory, see \cite{robinson}.  The \emph{Assouad dimension} is designed to capture this information and is defined by
\begin{eqnarray*}
\dim_\text{A} X \ = \  \inf \bigg\{ \  \alpha>0 &:& \text{     there exists a constant $C >0$ such that,} \\
&\,& \hspace{10mm}  \text{for all $0<r<R $ and $x \in X$, } \\ 
&\,&\hspace{20mm}  \text{$ N_r\big( B(x,R) \cap X \big) \ \leq \ C \bigg(\frac{R}{r}\bigg)^\alpha$ } \bigg\}.
\end{eqnarray*}
The key difference between this and the box dimension, say,  is that one does not seek covers of the whole set, but only a small ball, and  the expected covering number  is appropriately normalised.     It is a simple and instructive exercise to show  that
\[
0 \leq \hd X \leq  \lbd X \leq \ubd X \leq \ad X \leq d
\]
for any bounded $X \subseteq \mathbb{R}^d$.  Moreover,  these inequalities can be strict inequalities or equalities in any combination. Sets for which the Assouad, box, and Hausdorff dimensions coincide must exhibit some form of homogeneity, both in space and scale.   For example, if $X$ is Ahlfors-David regular then $\hd X = \bd X = \ad X$.

The Fourier dimension is of a different flavour and is not defined in terms of covers.  However, by connecting the Hausdorff dimension to energies and Fourier transforms, the Fourier dimension quickly adopts it place in this story.   Frostman's lemma allows us to write the Hausdorff dimension of a Borel set $X$ in terms of the $s$-energy $I_{s}$ of measures $\mu\in\M(X)$, where $\M(X)$ is the set of  positive and finite Borel measures supported on $X$.  The  $s$-energy of $\mu$ is
\begin{equation*}
		I_{s}(\mu) = \iint |x-y|^{-s}\,d\mu(x)\,d\mu(y),
\end{equation*}
and   the Hausdorff dimension of a Borel set $X$ may be defined as
\begin{equation*}
		\hd X = \sup\{ s\geq0 : \exists \mu\in\M(X) : I_{s}(\mu)<\infty \}.
\end{equation*}
Deriving the Hausdorff dimension via this alternative definition is often referred to as the potential theoretic method;  see \cite{Fal03,Mat15} for more details.  These energy integrals can be expressed in terms of the Fourier transform of the measure and this connection opens up a rich interplay between dimension theory and Fourier analysis. If $\mu$ is a finite Borel measure, we define its \emph{Fourier transform} by
\begin{equation*}
		\widehat{\mu}(z) = \int e^{-2\pi iz\cdot x}\,d\mu(x).
\end{equation*}
Using that the Fourier transform of the Riesz kernel $|z|^{-s}$ is, in the distributional sense, a constant multiple of $|z|^{s-d}$, and Parseval's theorem, the $s$-energy of $\mu\in\M(\rd)$, for $s\in(0,d)$, may be expressed as
\begin{equation*}
		I_{s}(\mu) =C({d,s}) \int \big| \widehat{\mu}(z) \big|^2 |z|^{s-d}\,dz
\end{equation*}
where $C(d,s)$ is a constant depending only on $s$ and $d$. This relationship between the Hausdorff dimension of sets and the Fourier transform of measures they support motivates the definition of the \emph{Fourier dimension} of a finite Borel measure as
\begin{equation*}
		\fd \mu = \sup\big\{ s\geq0 : \sup_z\big| \widehat{\mu}(z) \big|^2  |z|^{s} < \infty \big\},
\end{equation*}
and of a Borel set  $X\subseteq\rd$ as
\begin{equation*}
		\fd X = \sup\big\{ \min\{ \fd \mu,  d\} : \mu\in\M(X) \big\}.
\end{equation*}
For a Borel set $X\subseteq\rd$, 
\[
0\leq\fd X\leq\hd X
\]
and   these inequalities can be strict in any combination. Sets for which the Fourier and Hausdorff dimensions coincide are called \emph{Salem sets}. Constructing non-trivial deterministic Salem sets is challenging, but random examples abound.  We refer the reader to \cite{FH23, Kau81, Ham17, Mat15} for a more detailed   history of Salem sets.  

We end this introductory section with a simple example where the four notions of dimension discussed here take on distinct values.  Let $X \subseteq \mathbb{R}^2$ be given by
\[
X =\{1/n\}_{n \in \mathbb{N}} \times [0,1].
\]
Then it is straightforward to show that
\[
\fd X = 0; \quad \hd X = 1; \quad \bd X = 3/2; \quad  \ad X = 2;
\]
and the unfamiliar reader may enjoy demonstrating these claims for themself.  Each notion of dimension is describing the size of $X$ in a different way and they come to very different conclusions!

\section{Dimension interpolation}

Dimension interpolation is a novel programme in the dimension theory of fractals and outlines a new perspective in the way one considers dimension.   Historically, the different notions of dimension described in the previous section have been  considered largely  in isolation.  However, dimension interpolation suggests that we should  try to view them as different facets of the same object. This approach will  give rise to a continuum of dimensions, which more fully describe the geometric  structure of the space.  Moreover, this will yield a more nuanced understanding of the individual notions of dimensions as well as   better applications to a variety of problems; see \cite{Fra19} for an introductory survey.

  More concretely, given `dimensions' $\dim$ and $\textup{Dim}$ which generally satisfy $\dim X \leq \textup{Dim} \, X$, we wish to understand the (unexplored) gap between the dimensions by introducing a continuum of dimension-like functions $\{f_\theta\}_{\theta \in [0,1]}$  which (ideally) satisfy, for all reasonable $X$:  
\begin{itemize}
\item $f_0(X) = \dim X$  
\item $f_1(X) = \textup{Dim} \, X$  
\item $\dim X \leq f_\theta(X) \leq \textup{Dim} \, X, $ for $ \theta \in (0,1)$   
\item $f_\theta(X)$ is continuous in $\theta$, certainly for $\theta \in (0,1)$ and ideally at the `endpoints'
\item the definition of $f_\theta $ is `natural'  and incorporates aspects of both $\dim$ and $\textup{Dim}$
\item $f_\theta$ gives rise to a `rich theory'.
\end{itemize}
The most important of these points are the final two. It is vital that the function $\theta \mapsto f_\theta(X)$ is ripe with readily interpreted, meaningful, and nuanced information concerning the geometry of $X$.    If such an interpolation   $f_\theta(X)$ can be defined,  then the benefits are likely to include:
\begin{itemize}
\item a better understanding of  $\dim$ and $\textup{Dim}$ 
\item a more unified and comprehensive theory of  dimension
\item an explanation or exhibition  of one type of behaviour changing into another   
\item more information, leading to better applications in many directions
\item a large new set of questions and research directions.
\end{itemize}
In the following subsections we describe three concrete examples of this philosophy in action: the Fourier spectrum (which interpolates between the Fourier and Hausdorff dimensions);  the intermediate dimensions (which interpolate between the Hausdorff  and box dimensions); and  the Assouad spectrum (which interpolates between the box and Assouad dimensions).

The word `interpolation' can mean many things, but it often refers to constructing a function with some desired properties based on knowledge of the outputs at particular values. In dimension interpolation we are not interpolating based on knowledge of $\dim X$ and $\textup{Dim} \, X$, but rather based on knowledge of $\dim $ and $\textup{Dim}  $, that is, knowledge of the \emph{concepts} underpinning the dimensions, not the specific values in a particular instance.

\subsection{The Assouad  spectrum}

The Assouad  spectrum was  introduced by   Fraser and Yu in \cite{assouadspectrum} to interpolate between (upper) box  dimension and Assouad dimension. This was the first appearance of `dimension interpolation'.  The first observation is that the definition of  Assouad dimension uses  two scales (the `localisation scale' $R$ and the `covering scale' $r$) but the box dimension just uses one scale.  If the covering scale is very much smaller than the localisation scale, then one might expect the upper box dimension to appear (heuristically, covering the small piece is not so different from covering the whole set).  Moreover, pairs of scales which witness the Assouad dimension are often closer together than one might expect (with the obvious caveat that $R/r \to \infty$).  The Assouad spectrum is defined by using the interpolation parameter $\theta \in (0,1)$  to fix the relationship between these two scales.  It turns out that the most interesting way to do this is to set $r= R^{1/\theta}$ for $R \leq 1$. 

More precisely, the   \emph{Assouad spectrum} of $X \subseteq \rd$ at $\theta \in (0,1)$ is defined by
\begin{align*}
\as X  \ = \    \inf \bigg\{ \  \alpha &: \text{   there exists  a constant  $C >0$ such that,} \\
 &  \text{for all $0<  R<1$ and $x \in X$, }
  \text{$ N_{R^{1/\theta}} \big( B(x,R) \cap X \big) \ \leq \ C \bigg(\frac{R}{R^{1/\theta}}\bigg)^\alpha$ } \bigg\}.
\end{align*}
 One may think of the Assouad spectrum as the function $\theta \mapsto \as X$.  The related   \emph{upper Assouad spectrum} (or regularised spectrum) of $X \subseteq \rd$ at $\theta \in (0,1)$ is defined by
\begin{align*}
\overline{\dim}_\textup{A}^\theta X  \ = \    \inf \bigg\{ \  \alpha &: \text{   there exists  a constant  $C >0$ such that,} \\
 &  \text{for all $0<r \leq R^{1/\theta} < R<1$ and $x \in X$, }
  \text{$ N_{r} \big( B(x,R) \cap X \big) \ \leq \ C \bigg(\frac{R}{r}\bigg)^\alpha$ } \bigg\}.
\end{align*}
  Sometimes, as we shall see in this survey, it is useful to talk about  the upper Assouad spectrum in certain applications.  That said, it was proved in \cite{canadian} that
\[
\overline{\dim}_\textup{A}^\theta X = \sup_{\theta' \in (0,\theta)} \dim_\textup{A}^{\theta'}  X
\]
and so the upper Assouad spectrum contains less information that the Assouad spectrum itself.  The upper Assouad spectrum is clearly non-decreasing in $\theta$ but the Assouad spectrum need not be.  However, in most commonly studied situations it is non-decreasing and therefore the two spectra coincide.  Finally, the \emph{quasi-Assouad dimension} can be  defined by
\[
\qad X = \lim_{\theta \nearrow 1} \as X.
\]
The quasi-Assouad dimension was introduced by L\"u and Xi in \cite{quasiassouad} but the above formulation  in terms of the Assouad spectrum  was proved in \cite{canadian}. Generally,  for $\theta \in (0,1)$,
\[
  \ubd X \leq  \as X \leq \overline{\dim}_\textup{A}^\theta X  \leq \qad X \leq \ad X
\]
and
\[
\as X \leq \min\bigg\{\frac{\ubd X}{1-\theta} , \ \qad X\bigg\}.
\]
In particular, $\as X \to \ubd X$ as $\theta \to 0$, which one may think of as continuity at the endpoint $\theta=0$.  The possible functions which can be realised as the Assouad spectrum of some set were completely classified in \cite{specclass}.  It turns out that the family of possible Assouad spectra is very rich indeed.

The Assouad spectrum has already found numerous applications, some coming in surprising areas.  These applications include $L^p \to L^q$ mapping properties of spherical maximal functions \cite{anderson, roos, beltran}, weak embeddability problems  \cite{stathis}, the spiral winding problem \cite{spirals}, certain H\"older regularity problems \cite{holder},    quasiconformal mapping problems \cite{stathistyson,stathisquasi}, and to the Sullivan dictionary from conformal dynamics \cite{bullams}.   Applications to the dimension theory of orthogonal projections were provided in \cite{ffs} and we exhibit some of these applications here in  Section \ref{asssec}.

\subsection{The intermediate dimensions}

Intermediate dimensions were introduced by Falconer, Fraser and Kempton in \cite{intdims} to interpolate between the Hausdorff   and box dimensions. The first observation is that the definitions of Hausdorff and box dimension are both in terms of covers.  The difference is that for Hausdorff dimension there is no restriction on the relative sizes of  diameters of sets used in the cover, but for the box dimension all covering sets must have the same diameter (well, they may as well have the same diameter).  If the Hausdorff and box dimensions are distinct, then the set exhibits inhomogeneity in scale witnessed by the fact that one may find subtle efficient covers using sets with dramatically different sizes.  The intermediate dimensions are defined by using the interpolation parameter $\theta \in (0,1)$ to restrict the range of allowable diameters used in the covers.  It turns out that the most interesting way to do this is to insist that sets $U,V$ used in a cover must satisfy $|U| \leq |V|^\theta$.

More precisely,  for $X \subset \R^d$ and $0 < \theta \leq 1$, the  {\em lower intermediate dimension} of $X$ is defined by
\begin{align*}
\lid X=  \inf \big\{& s\geq 0  :  \mbox{ \rm for all $\epsilon >0$ and all $r_0>0$,  there exists $0<r\leq r_0$} \\
 & \mbox{ \rm and a cover $ \{U_i\} $ of $X$ such that  $r^{1/\theta} \leq  |U_i| \leq r $ and 
 $\sum |U_i|^s \leq \epsilon$}  \big\}\nonumber
\end{align*}
and the corresponding {\em upper intermediate dimension} by
\begin{align*}
\uid X =  \inf \big\{& s\geq 0  :  \mbox{ \rm for all $\epsilon >0$ there exists $r_0>0$ such that for all $0<r\leq r_0$,} \\
 & \mbox{ \rm there is a cover $ \{U_i\} $ of $X$ such that  $r^{1/\theta} \leq  |U_i| \leq r$ and 
 $\sum |U_i|^s \leq \epsilon$}  \big\}.\nonumber
\end{align*}
We can include  $\theta = 0$ in this definition by adopting the convention that $r^{1/0}=0$ for $r \in (0,1)$.  In that case, there are no lower bounds  on the diameters of covering sets and we recover the Hausdorff dimension in both the lower and upper case. When $\theta=1$ all covering sets are forced to have the same diameter and we recover the lower and upper box dimensions, respectively.

Various properties of intermediate dimensions are established in \cite{intdims}; see also \cite{banajigen}. In particular, $\lid X$ and $\uid X$ are monotonically increasing in $\theta\in [0,1]$, are continuous except perhaps at $\theta = 0$, and are invariant under bi-Lipschitz mappings. The possible functions which can be realised as the intermediate dimensions of some set were completely classified in \cite{intclass}.  Once again, it turns out that there is a rich family of possibilities.

The intermediate dimensions have already found numerous applications.  These applications include the dimension theory of Brownian images \cite{burrell}, bi-Lipschitz classification problems \cite{banajicarpets},  multifractal analysis \cite{banajicarpets}, and Sobolev mapping problems \cite{frasertyson}.   Applications to the dimension theory of orthogonal projections were provided in \cite{bff21} and we exhibit some of these applications here in Section \ref{intsec}.

\subsection{The Fourier spectrum}

The Fourier spectrum was introduced by   Fraser  in \cite{fourierspectrum} to interpolate between the Fourier   and Hausdorff dimensions. The first observation is that the definitions of Fourier  and Hausdorff dimension can both be expressed in terms of decay of the Fourier transform of measures supported on the set. The difference is that for Fourier dimension the decay is measured in $L^\infty$, but for the Hausdorff dimension the decay is measured in an averaged $L^2$ sense.   If the Fourier  and Hausdorff dimensions are distinct, then one can find measures on the set whose Fourier transform decays on average but it is not possible to achieve the same decay rate uniformly.  The Fourier spectrum is defined by using the interpolation parameter $\theta \in (0,1)$ to consider different averages, namely in an appropriate $L^p$ sense, where $p=2/\theta$.

The definition of Hausdorff dimension for sets using energy integrals can be extended naturally to measures.  We define  the \emph{Sobolev dimension} of a measure $\mu\in\M(\rd)$ by
\begin{equation*}
	\sd \mu = \sup\bigg\{ s \in \R : \int_{\rd}\big|\widehat{\mu}(x)\big|^2 |x|^{s-d}\,dx<\infty \bigg\}.
\end{equation*}
This concept goes back to Peres--Schlag \cite{PS00}; see also \cite{Mat15}.  For any Borel measure $\mu \in \M(\rd)$, $0 \leq \fd \mu\leq\sd \mu$ and $\hd \mu \geq \min\{d, \sd \mu\}$, where $\hd\mu$ refers to the Hausdorff dimension of a measure $\hd\mu = \inf\{ \hd X : X\text{ is a Borel set with }\mu(X)>0 \}$. Contrary to what one might expect, both the Fourier and Sobolev dimensions of measures may exceed the Hausdorff dimension of the ambient space. Take as an example the Lebesgue measure restricted to $[0,1]$, $\L^1\big|_{[0,1]}$. This measure satisfies $\fd \L^1\big|_{[0,1]} = \sd\L^1\big|_{[0,1]} = 2>\hd[0,1]$.

In order to define the Fourier spectrum, we first define $(s,\theta)$-energies of a measure $\mu\in\M(\rd)$, for $\theta\in(0,1]$ and $s\geq0$, by
\begin{equation*}
	\J_{s,\theta}(\mu) = \bigg( \int_{\rd} \big| \widehat{\mu}(z) \big|^{\frac{2}{\theta}}|z|^{\frac{s}{\theta}-d}\,dz \bigg)^\theta,
\end{equation*}
and, for $\theta = 0$, by
\begin{equation*}
	\J_{s,0}(\mu) = \sup_{z\in\rd} \big| \widehat{\mu}(z) \big|^2|z|^{s}.
\end{equation*}
Then the \emph{Fourier spectrum} of $\mu$ at $\theta$ is
\begin{equation*}
	\fs \mu = \sup\{ s \in \R : \J_{s,\theta}(\mu)<\infty \},
\end{equation*}
and for each $\theta\in[0,1]$, $\fd \mu\leq\fs \mu\leq\sd \mu$, with equality on the left if $\theta= 0$ and equality on the right if $\theta = 1$. As a function of $\theta$, $\fs\mu$ is concave and continuous for $\theta\in(0,1]$ by \cite[Theorem~1.1]{fourierspectrum} and, in addition,  continuous at $\theta=0$ provided $\mu$ is compactly supported by \cite[Theorem~1.3]{fourierspectrum}. Further, it was proved in \cite[Proposition 4.2]{CFdO24} that for compactly supported $\mu$, $\fs \mu \leq \fd \mu + d \theta$ for  all $\theta \in [0,1]$.

For  a Borel set $X\subseteq\rd$, the \emph{Fourier spectrum} is defined by
\begin{equation*}
	\fs  X = \sup\big\{ \min\{\fs \mu, d\} : \mu\in\M(X) \big\}.
\end{equation*}
Then,  for all $\theta\in[0,1]$, $\fd X\leq\fs  X \leq \hd X$, with equality on the left if $\theta= 0$ and equality on the right if $\theta = 1$. Moreover,  $\fs X$ is continuous for all $\theta\in[0,1]$ by \cite[Theorem~1.5]{fourierspectrum}.  Unlike for measures, $\fs X$ need not be concave, but it does satisfy $\fs X \leq \fd X + d \theta$ for  all $\theta \in [0,1]$.

The Fourier spectrum has already found numerous applications where one uses the additional information provided by the spectrum of dimensions to obtain stronger results than one can get by appealing to the Fourier dimension alone.  These applications include new Hausdorff dimension estimates in the Falconer distance problem \cite[Section 7]{fourierspectrum}, sumset type problems \cite[Section 6]{fourierspectrum}, and the celebrated restriction problem in harmonic analysis \cite{restriction}.  Applications to the dimension theory of orthogonal projections were provided in \cite{ana} and we exhibit some of these applications here in Section \ref{specsec}.

\subsection{Simple example: revisited}

Recall the simple example described above used to distinguish between our four notions of fractal dimension. That is,  $X \subseteq \mathbb{R}^2$   given by
\[
X =\{1/n\}_{n \in \mathbb{N}} \times [0,1].
\]
Of course, the reader is now wondering how dimension interpolation handles this example.  One can show, with a little more work this time, that for all $\theta \in (0,1)$
\[
\fs X = \theta; \quad \dim_\theta X = \frac{1+2 \theta}{1+\theta}; \quad \as X = \min\left\{ \frac{3/2-\theta}{1-\theta},  2\right\}
\]
and the  reader may enjoy demonstrating these claims for themself.  One might look at this example and declare that the Hausdorff dimension is clearly 1 and be done with it.  But observe that---even in this very simple case---dimension interpolation has unearthed much more geometric information about $X$.  Moreover, we uncover a complete interpolation between 0 and 2, which observes  the four isolated notions of fractal dimension in the process.  We are also led to many further questions pertaining to finer geometric features of $X$: why is the Fourier spectrum affine (and as small as it can be given the Fourier and Hausdorff dimensions)?  why are the intermediate dimensions strictly concave?  why does the Assouad spectrum have a phase transition at $\theta=1/2$? why does the Assouad spectrum reach the Assouad dimension for values of $\theta<1$?  Are these behaviours typical for other examples? etc etc.

\begin{figure}[H] 
	\centering
	\includegraphics[width=\textwidth]{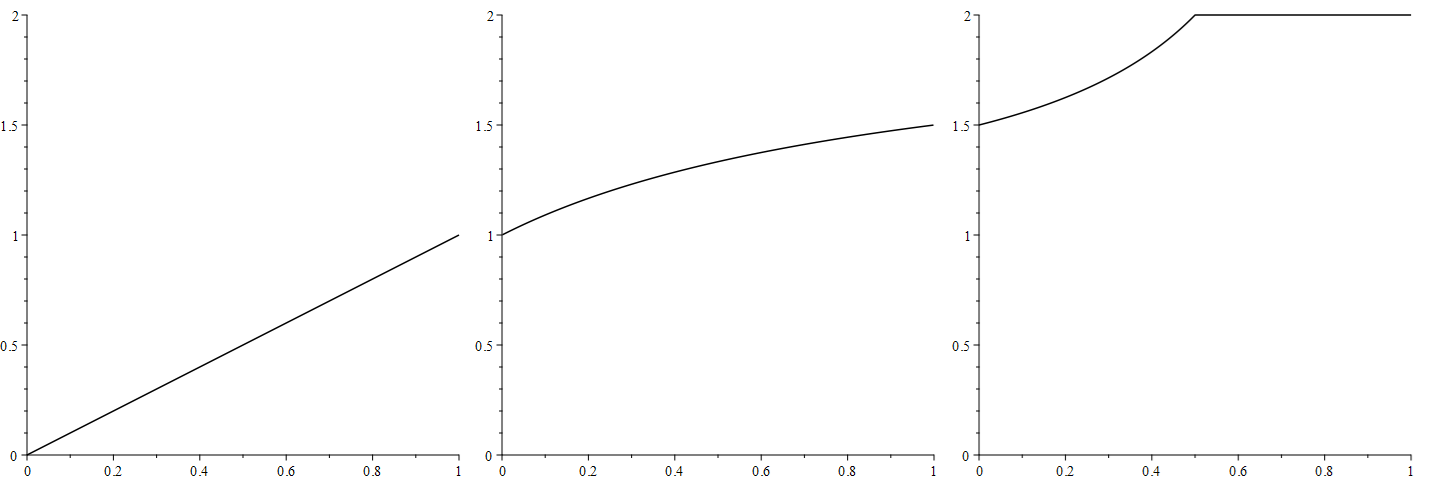}
\caption{\emph{Complete interpolation}: plots of the Fourier spectrum (left), the intermediate dimensions (centre), and the Assouad spectrum (right) as functions of $\theta \in (0,1)$ for the simple example $X =\{1/n\}_{n \in \mathbb{N}} \times [0,1]$. }
\end{figure}

It turns out that many different properties of the interpolation functions can be observed in more complicated cases. There are many results in this direction, but we briefly mention a few striking examples.  For self-affine Bedford--McMullen carpets the intermediate dimensions typically have infinitely many phase transitions and points with infinite right derivative \cite{banajicarpets}.  For self-affine Gatzouras--Lalley carpets the Assouad spectrum can be differentiable on $(0,1)$ including several strictly concave parts and several strictly convex parts \cite{lgspectrum}.  For the arclength measure on the moment curve in $\rd$ the Fourier spectrum is piecewise affine with $d-2$ phase transitions \cite{restriction},  and for certain Riesz products the Fourier spectrum is strictly concave and smooth \cite{fourierspectrum}.

\section{Dimension theory of orthogonal projections}

For integers $1\leq k<d$, we write $G(d,k)$ for the Grassmannian manifold of $k$-dimensional subspaces $V\subseteq\rd$.  This space may be equipped with the invariant Borel probability measure $\gamdk$ obtained from the Haar measure on the topological group of rotations around the origin. We write  $P_{V}:\rd\to V$ for the orthogonal projection onto $V\in\gdk$, which we identify with $\rk$.   

A central question in fractal geometry and geometric measure theory is to understand how geometric properties of a set $X \subseteq \rd$ relate to the geometric properties of the projections $P_V(X)$.  For example one may consider how the fractal dimensions of $X$ are related to the fractal dimensions of $P_V(X)$.  It does not take long to observe that this is a non-trivial problem and that we should not try to describe all the projections simultaneously.    For example, even for the simplest sets there may be particular projections which behave badly in the sense that the dimension seems not to reflect the dimension of $X$.  For example consider a line segment in the plane.  This has Hausdorff dimension 1 and all projections onto 1-dimensional subspaces will have Hausdorff dimension 1 (as expected?) apart from the (unique) 1-dimensional subspace which is orthogonal to the line segment.  On the other hand, since Hausdorff (and box) dimension cannot increase under Lipschitz maps, we at least get
\[
\hd P_V(X) \leq \min\{k, \hd X\}
\]
for \textbf{all} $V \in G(d,k)$. The Assouad dimension may increase under Lipschitz maps and so the corresponding bound does not hold for Assouad dimension---more on this later. Given the above discussion, one may aim to describe the dimensions of $P_V(X)$ for generic $V$.  For example, for almost all $V$ with respect to $\gamdk$  or for all $V$ outside of an exceptional set with small Hausdorff dimension.  The most important result in this direction is the seminal Marstrand--Mattila projection theorem, which we describe in the next section.

\subsection{Hausdorff dimension and the Marstrand--Mattila theorem}

One of the most well-known and influential results in fractal geometry is Marstrand's projection theorem.  This was proved in the plane by Marstrand   \cite{Mar54} and a simpler potential theoretic proof was given by Kaufman \cite{Kau68}.  Kaufman's proof was later generalised to higher dimensions by Mattila     \cite{Mat75} and the result is that  for Borel sets $X\subseteq\rd$ and $\gamdk$ almost all $V\in\gdk$, 
\[
\hd P_{V}(X) = \min\{k, \hd X \}.
\]
This result stimulated a huge amount of activity in fractal geometry, geometric measure theory and related fields.  We do not attempt a comprehensive survey here, but refer the reader to the (not that recent) surveys \cite{FFJ15, Mat14} and \cite{Shm15}, the latter of which focuses on dynamical situations, as well as the more recent survey \cite{falconer25}.  One direction which is relevant for us is the study of the exceptional set in the Marstrand--Mattila theorem.  The exceptional set has zero $\gamdk$ measure, but it may still be large in the sense of Hausdorff dimension.  For example, Peres and Schlag \cite[Proposition 6.1]{PS00} gave the following upper bound for Borel sets $X\subseteq\rd$. For $u\in[0,k]$,
\begin{equation}\label{eq:peresschlagbound}
    \hd\{ V\in \gdk : \hd P_{V}(X)< u \}\leq k(d-k)+u-\hd X.
\end{equation}
This bound is not sharp in general, but the situation is better understood in the plane.  Oberlin \cite{Obe12} conjectured that if $ \frac{\hd X}{2} \leq u \leq \min\{\hd X, 1\}$, then
\begin{equation}\label{eq:oberlin}
		\hd \{ V\in\gto : \hd P_{V}(X)<u \}\leq 2u-\hd X
\end{equation}
and it is well-known that one cannot do better than this.  Oberlin's conjecture \eqref{eq:oberlin}  was  recently proved by Ren and Wang  \cite[Theorem~1.2]{RW23} in a breakthrough which  built on significant recent progress  from various people; see, for example,  \cite{bourgain, guth,OS23,OS23+,OSW24} and the references therein.

\subsection{Dimension profiles and box dimension} \label{boxpotential}

Given Marstrand's projection theorem, one of the most natural follow up questions is whether or not there is an analogue for other notions of dimension.  It turns out that there are natural analogues for the box dimensions; that is, the box dimensions of orthogonal projections take on a constant value almost surely.  However, the constant value is rather more complicated than simply $\min\{k,\dim X\}$ and in general is described by a `dimension profile'.  These results were first established by Falconer and Howroyd \cite{falconerhowroyd,falconerhowroyd2,howroyd}; see also \cite{falconerprofile, falconerprofile2}.

Dimension profiles may be defined in terms of capacities with respect to certain kernels \cite{falconerprofile}. For $s \in [0,d]$ and $r>0$ we define the  kernel
\be 
 \phi_r^s(x)= \min\bigg\{ 1, \bigg(\frac{r}{|x|}\bigg)^s\bigg\} \quad (x\in \rn).\label{ker}
\ee
For a  non-empty compact  $X \subseteq \rn$,  the \emph{capacity}, $C_r^s(X)$, of $X$ with respect to this kernel is  given by
\[
\frac{1}{C_r^s(X)}\  = \ \inf_{\mu \in {\mathcal M}(X)}\int\int \phi_r^s(x-y)d\mu(x)d\mu(y)
\]
where  ${\mathcal M}(X)$ denotes the collection of Borel probability measures supported by $X$.  The double integral inside the infimum is called the {\it energy} of $\mu$ with respect to the kernel. Compare this with the $s$-energy $I_s(\mu)$ and the potential theoretic approach to Hausdorff dimension where a different kernel is used.  The capacity of a general bounded set is taken to be that of its closure.
For bounded $X \subseteq \rn$ and $s>0$ we define the {\it lower} and {\it upper box dimension   profiles} of $X$ by
\[
\underline{\mbox{\rm dim}}_{\rm B}^s X \ =\   \varliminf_{r\to 0} \frac{\log  C_r^s(X)}{-\log r},\quad \overline{\mbox{\rm dim}}_{\rm B}^s X \ =\   \varlimsup_{r\to 0} \frac{\log  C_r^s(X)}{-\log r}.
\]
In particular, by \cite[Corollary 2.5]{falconerprofile} if $s\geq d$ then
\[
\underline{\mbox{\rm dim}}_{\rm B}^s X \ =\  \lbd X, \ \  \overline{\mbox{\rm dim}}_{\rm B}^s X \ =\  \ubd X,
\]
but for  $s < d$ the dimension profiles give the almost sure  dimensions of projections of sets as well as information on the size of the set of exceptional projections, as follows.

\begin{thm}{\rm \cite[Theorems 1.1, 1.2]{falconerprofile}}\label{mainA}

\noindent $(i)$ Let $1\leq k<d$ be an integer.  For almost all $V \in G(d,k)$, if $X\subseteq \rn$ is bounded
\[
\lbd P_V (X) = \lbd^k X, \ \ \text{and} \ \ \ubd P_V (X) = \ubd^k X.
\]
\noindent $(ii)$ For $0<s<k$, if $X\subseteq \rn$ is bounded
\[
\hd \{ V \in G(d,k) : \lbd P_V (X) < \lbd^s X\} \leq k(d-k)-(k-s),
\]
\[
\hd \{ V \in G(d,k) : \ubd P_V (X) < \ubd^s X\} \leq k(d-k)-(k-s).
\]
\end{thm}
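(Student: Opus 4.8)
The plan is to recast both sides of every inequality in terms of the capacities $C_r^k$ and then run a potential‑theoretic argument over the Grassmannian against a Frostman measure carried by the exceptional set; we may assume $X$ is compact, replacing it by its closure. Since $k\ge k$, \cite[Corollary 2.5]{falconerprofile} applied in $\rk$ gives $\lbd P_VX=\varliminf_{r\to0}\log C_r^k(P_VX)/(-\log r)$ and $\ubd P_VX=\varlimsup_{r\to0}\log C_r^k(P_VX)/(-\log r)$ for each $V$, where $C_r^k(P_VX)$ is the capacity of $P_VX\subseteq\rk$ with respect to $\phi_r^k$; and $\lbd^kX$, $\ubd^kX$ are by definition the same limits with $C_r^k(X)$ in place of $C_r^k(P_VX)$. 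The upper bounds $\lbd P_VX\le\lbd^kX$ and $\ubd P_VX\le\ubd^kX$ hold for \emph{every} $V$ and are immediate: every $\nu\in\mathcal M(P_VX)$ equals $P_{V\sharp}\mu$ for some $\mu\in\mathcal M(X)$, and since $|P_V(x-y)|\le|x-y|$ we have $\phi_r^k(P_V(x-y))\ge\phi_r^k(x-y)$, so
\[
\frac1{C_r^k(P_VX)}=\inf_{\mu\in\mathcal M(X)}\iint\phi_r^k(P_V(x-y))\,d\mu(x)\,d\mu(y)\ \ge\ \inf_{\mu\in\mathcal M(X)}\iint\phi_r^k(x-y)\,d\mu(x)\,d\mu(y)=\frac1{C_r^k(X)},
\]
whence the dimension inequalities.

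For the exceptional‑set bounds in $(ii)$ I would argue by contradiction, treating the upper box case (the lower box case is identical, except that every small scale is available rather than merely a subsequence). Put $E=\{V\in G(d,k):\ubd P_VX<\ubd^sX\}$ and suppose $\hd E>k(d-k)-(k-s)$; fix $\beta$ strictly between these, and by Frostman's lemma on the $k(d-k)$‑dimensional compact manifold $G(d,k)$ take a probability measure $\omega$ with $\spt\omega\subseteq E$ and $\omega(B(V,\rho))\lesssim\rho^{\beta}$. The geometric input is that, for a unit vector $e$, the set $\{V:|P_Ve|<\delta\}$ is comparable to the $\delta$‑neighbourhood of the codimension‑$k$ submanifold $\{V:e\perp V\}\cong G(d-1,k)$, hence is covered by $\lesssim\delta^{-(k(d-k)-k)}$ balls of radius $\delta$; therefore $\omega\{V:|P_Vz|<\delta|z|\}\lesssim\delta^{\alpha}$ with $\alpha:=\beta-(k(d-k)-k)>s$. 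A layer‑cake computation, using $\alpha>s$, then gives the key estimate
\[
\int_{G(d,k)}\phi_r^k(P_Vz)\,d\omega(V)\ \lesssim\ \min\{1,(r/|z|)^{s}\}=\phi_r^s(z)\qquad(z\in\rd).
\]

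Bounding the infimum defining $1/C_r^k(P_VX)$ by its value at a near‑optimal $\mu\in\mathcal M(X)$ for $\phi_r^s$, applying the key estimate with $z=x-y$, and using Fubini yields $\int_E C_r^k(P_VX)^{-1}\,d\omega(V)\lesssim C_r^s(X)^{-1}$. Now fix $t<t'<\ubd^sX$ and a sequence $r_m\to0$ with $C_{r_m}^s(X)\ge r_m^{-t'}$, passing to a subsequence decaying fast enough that $\sum_m r_m^{t'-t}<\infty$; Markov's inequality in $(E,\omega)$ gives $\omega\{V:C_{r_m}^k(P_VX)<r_m^{-t}\}\lesssim r_m^{t'-t}$, so Borel--Cantelli yields $C_{r_m}^k(P_VX)\ge r_m^{-t}$ eventually for $\omega$‑a.e.\ $V$, hence $\ubd P_VX\ge t$; letting $t\nearrow\ubd^sX$ we get $\ubd P_VX\ge\ubd^sX$ for $\omega$‑a.e.\ $V$, contradicting $\spt\omega\subseteq E$. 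This proves both inequalities in $(ii)$. Part $(i)$ follows by running the same argument once more with $\omega=\gamdk$ and $s=k$: the borderline exponent $\alpha=k$ now produces an extra factor $\log(1/r)$ in the analogue of the key estimate, but this is harmless in $\log C_r^k(P_VX)/(-\log r)$, so $\lbd P_VX\ge\lbd^kX$ and $\ubd P_VX\ge\ubd^kX$ for $\gamdk$‑a.e.\ $V$; combined with the upper bounds valid for all $V$ this gives the almost sure equalities.

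The main obstacle is the sharp exponent $k(d-k)-(k-s)$, which hinges entirely on the geometry of $G(d,k)$: one must know precisely that $\{V:|P_Vz|\lesssim\delta|z|\}$ is a $\delta$‑neighbourhood of a codimension‑$k$ submanifold, so that a $\beta$‑dimensional Frostman measure charges it by at most $\delta^{\beta-(k(d-k)-k)}$, and then that the surviving exponent $\alpha=\beta-(k(d-k)-k)$ exceeds $s$ exactly when $\beta>k(d-k)-(k-s)$ — which is why the threshold in $(ii)$ is this number and no larger. The remaining ingredients — the capacity/box‑dimension dictionary, the pushforward identity, Fubini, Markov's inequality, Borel--Cantelli, and interpolating a liminf between consecutive dyadic scales — are routine.
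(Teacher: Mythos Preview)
The survey paper does not prove this theorem; it is quoted without proof from \cite{falconerprofile}, so there is no in-paper argument to compare against. Your sketch is essentially the argument of that cited source: recast box dimensions of $P_VX$ and the profiles of $X$ as capacity growth rates, obtain the uniform upper bounds from the Lipschitz inequality $\phi_r^k(P_V(x-y))\ge\phi_r^k(x-y)$, and for the lower bounds test against a Frostman measure $\omega$ on the exceptional set, using the codimension-$k$ geometry of $\{V:e\perp V\}\cong G(d-1,k)$ inside $G(d,k)$ to get $\omega\{V:|P_Ve|<\delta\}\lesssim\delta^{\alpha}$ with $\alpha=\beta-(k(d-k)-k)$, then a layer-cake computation yielding $\int\phi_r^k(P_Vz)\,d\omega(V)\lesssim\phi_r^s(z)$, followed by Fubini, Markov and Borel--Cantelli along a well-chosen sequence of scales. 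The identification of the threshold $k(d-k)-(k-s)$ via this codimension count, and the harmless logarithmic loss at the endpoint $\omega=\gamma_{d,k}$, $s=k$, match \cite{falconerprofile} exactly.

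Two technicalities you pass over but which are handled there: the surjectivity of $\mu\mapsto P_{V\sharp}\mu$ from $\mathcal M(X)$ onto $\mathcal M(P_VX)$ (needed for the displayed equality in your upper-bound step; true for compact $X$ via a Borel section of the continuous surjection $P_V|_X$), and the measurability of the exceptional set so that Frostman's lemma is applicable. Neither is a genuine obstacle.
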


Although the values of $\lbd P_V (X)$ and $ \ubd P_V (X)$ are constant for almost all $V\in G(d,k)$,  this constant can take any value in the range
 \begin{equation} \label{boxdimsbounds}
\frac{\ubd X}{1+(1/k-1/d)\ubd X} \  \leq \ \ubd^k X \leq \min \{k, \ubd X\},
\end{equation}
with analogous inequalities for lower box dimension. These inequalities were established in \cite{falconerhowroyd, falconerhowroyd2, howroyd} using dimension profiles directly, with examples showing them to be best possible in \cite{falconerhowroyd}.  A simpler approach using capacities was given recently in \cite{falconerprofile}. 

\subsection{Further Marstrand--type theorems?}

It turns out that there is no Marstrand theorem for Assouad dimension, that is, the Assouad dimension of the orthogonal projection of a compact set in $\rd$ does not necessarily take on a constant value almost surely. This was established by Fraser and Orponen \cite{FO17} and further strange behaviour was exhibited in \cite{antti}.  For example,  there  exist examples where $\ad P_V(X)$ takes a different value for each distinct $V$! With this in mind, it is natural to ask about the Assouad spectrum, but this is open.

\begin{ques} \label{ques1}
  Is there a Marstrand theorem for the Assouad spectrum?  More precisely, fix a compact  set $X$  in $\rd$, an integer $1 \leq k < d$ and $\theta \in (0,1)$: is it true that  $\as P_V(X)$ is the same for almost all $V \in G(d,k)$? 
\end{ques}

An intriguing special case of this question concerns the quasi-Assouad dimension (that is, when $\theta\to1$).   On the other hand, for $\theta\to 0$ we know the answer to the question  is yes and that the almost sure constant value is given by the upper box dimension profile $\ubd^k  X$. 

We do know that the intermediate dimensions satisfy a Marstrand theorem \cite{bff21}.  Here the almost sure constant is given by a continuum of dimension profiles which interpolate between $\min\{k, \hd X\}$ and $\ubd^k X$ (continuously at $\theta = 1$ but not necessarily at $\theta=0$). We will discuss this further in Section \ref{intsec}; in particular, see Theorem \ref{main}.  However, the question is open for the Fourier spectrum. The following  question was raised in \cite{ana}.

\begin{ques}\label{ques2}
  Is there a Marstrand theorem for the Fourier spectrum?  More precisely, fix a compact set  or finite compactly supported  Borel measure in $\rd$, an integer $1 \leq k < d$ and $\theta \in [0,1]$: is it true that  the value of the Fourier spectrum at $\theta$ of the projection onto $V$ is the same for almost all $V \in G(d,k)$? 
\end{ques}

An intriguing special case of this question concerns the Fourier dimension (that is, when $\theta=0$) but we are unaware of any progress on this front.  This question was discussed during the problem sessions of the Banff conference in 2024.  On the other hand, for $\theta=1$ we know the answer to the question  is yes for sets  by the Marstrand--Mattila theorem   and for measures $\mu\in\M(\rd)$ with $\sd\mu\leq k$ by \cite[Theorem~1.1]{HK97}.

We note that if the `pointwise' Questions \ref{ques1} and \ref{ques2}  (that is, for $\theta$ fixed) could be answered in the affirmative then, using continuity of the Fourier and Assouad spectra, the results could be upgraded to hold almost surely for all $\theta \in (0,1)$ simultaneously.

Finally, we briefly elaborate on the Assouad dimension situation.  Despite there not being a Marstrand theorem for Assouad dimension, it is true that for almost all $V \in G(d,k)$
\begin{equation} \label{assthm}
\ad P_V(X) \geq \min\{\ad X , k\},
\end{equation}
that is, one gets the expected lower bound almost surely.  This was proved in the case $d=2$ in \cite{FO17} and in general in \cite{fraserisrael}.  However, Orponen \cite{orponenassouad} proved something far stronger in the planar case $d=2$.  It turns out that the set of exceptions to \eqref{assthm} is not only of measure zero but also has Hausdorff dimension zero.  This beautiful result has had numerous applications in diverse directions.  For instance it was used to prove the Assouad dimension analogue of the Falconer distance problem in the plane \cite{fraserdistance}, it was used to study embeddings of self-similar sets \cite{hochman} (a problem which \emph{a priori} has nothing to do with Assouad dimension),  and to study the conformal Assouad dimension of general classes of self-affine sets in \cite{rutaraffine}.

\section{Using the Assouad spectrum to study   box dimension profiles} \label{asssec}

In this section we will prove the following theorem, which is from \cite{ffs}.  It shows that the Assouad spectrum can be used to bound the almost sure value of the upper box dimension of orthogonal projections from below. Said differently, the Assouad spectrum can be used to give a non-trivial lower bound for the box dimension profiles.

\begin{thm}\label{boxapp0}
Let $1\leq k < d$ and  $\theta \in (0,1)$. If  $X\subseteq \rn$ is bounded then, for almost all $V \in G(d,k)$,
\be\label{ubtheta}
\ubd P_V(X) \geq \ubd X - \max \{0, \ \overline{\dim}_\textup{A}^\theta X-k, \ (\ad X-k)(1-\theta)\}.
\ee
\end{thm}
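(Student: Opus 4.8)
The plan is to use the Assouad spectrum (and the Assouad dimension) to produce, at a carefully chosen sequence of scales, lower bounds for the covering numbers $N_r(P_V(X))$ that are valid for almost all $V$, and then compare these against covering numbers of $X$ to deduce the stated inequality for $\ubd P_V(X)$. The first step is to recall (from Theorem \ref{mainA} / Section \ref{boxpotential}) that for almost all $V \in G(d,k)$ one has $\ubd P_V(X) = \ubd^k X$, so it suffices to prove the inequality with $\ubd^k X$ in place of $\ubd P_V(X)$; this removes the projection direction $V$ from the problem and reduces everything to a statement about dimension profiles of $X$ itself. Equivalently, I want to show that the profile $\ubd^k X$ cannot drop too far below $\ubd X$, with the permitted deficit controlled by $\overline{\dim}_\textup{A}^\theta X - k$ and $(\ad X - k)(1-\theta)$.

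The heart of the argument is a scale-counting estimate. Fix $\theta \in (0,1)$ and write $\alpha = \overline{\dim}_\textup{A}^\theta X$ and $\beta = \ad X$. Pick a scale $R$ witnessing (up to $\e$) the upper box dimension of $X$, so that $N_R(X) \gtrsim R^{-(\ubd X - \e)}$, and consider the companion scale $r = R^{1/\theta} \ll R$. Cover $X$ by $N_R(X)$ balls of radius $R$; inside each such ball $B(x,R)$, the definition of the upper Assouad spectrum bounds $N_r(B(x,R)\cap X) \lesssim (R/r)^{\alpha}$, and the Assouad dimension bounds it by $\lesssim (R/r)^{\beta}$ as well, so we may use $(R/r)^{\min\{\alpha,\beta\}}$. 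The key point is that the profile $\ubd^k X$ controls how efficiently covering numbers at scale $r$ can be "compressed" relative to scale $R$ when only $k$-dimensional directions are available: there is an inequality of the shape
\[
N_r(X) \ \lesssim \ N_R(X) \cdot \left(\frac{R}{r}\right)^{\ubd^k X + \e} \cdot (\text{correction})
\]
coming from the capacity/kernel formalism of \cite{falconerprofile}, where the correction term accounts for the mismatch between the ambient dimension $d$ appearing in the local covering bounds and the profile dimension $k$. Unwinding this — using $N_r(X) \gtrsim r^{-(\ubd X - \e)}$ on the left and the local bounds on the right, and optimising over which of the two bounds $\alpha$ or $\beta$ is active — yields $\ubd X - (\ubd^k X) \le \max\{0,\ \alpha - k,\ (\beta-k)(1-\theta)\}$ after letting $\e \to 0$, which is exactly \eqref{ubtheta}. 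The three terms in the max arise naturally: $0$ when $\alpha \le k$ (the profile already equals $\ubd X$), $\alpha - k$ when the upper Assouad spectrum bound is the binding constraint, and $(\beta - k)(1-\theta)$ when instead the raw Assouad bound wins, the factor $1-\theta$ being precisely $1 - r\text{-to-}R$ logarithmic ratio weight $\log(R/r)/\log(1/r) = 1-\theta$.

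I expect the main obstacle to be making the middle inequality — the passage from local covering bounds inside $R$-balls to a global bound on $N_r(X)$ in terms of the profile $\ubd^k X$ — rigorous with the correct exponents. One cannot simply multiply $N_R(X)$ by the worst-case local count, since that would only reproduce the trivial bound involving $\ubd X$ and $\ad X$ with no role for $k$; the profile enters through a more delicate energy/capacity argument in which one builds a near-optimal measure for the kernel $\phi_r^k$ out of the covering data and estimates its $\phi_r^k$-energy using the local Assouad-type bounds. Getting the bookkeeping right — in particular tracking which scales the Assouad spectrum constant is allowed to "see" (it only constrains pairs with $r \le R^{1/\theta}$) versus where the full Assouad dimension must be invoked — is where the factor $1-\theta$ and the structure of the maximum are genuinely earned, and this is the step I would write out most carefully.
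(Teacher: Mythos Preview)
Your reduction in the first paragraph is correct and is exactly what the paper does: by Theorem~\ref{mainA}(i) it suffices to prove
\[
\ubd^k X \ \geq\ \ubd X - \max\{0,\ \overline{\dim}_\textup{A}^\theta X - k,\ (\ad X - k)(1-\theta)\},
\]
which is the content of the paper's Theorem~\ref{boxapp}. Your third paragraph also correctly identifies the mechanism that actually works: build a test measure from covering data and estimate its $\phi_r^k$-energy, splitting the scales according to where the upper Assouad spectrum bound is legitimate and where only the Assouad dimension is available.

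The problem is the middle paragraph. The displayed ``inequality of the shape''
\[
N_r(X)\ \lesssim\ N_R(X)\cdot\Big(\tfrac{R}{r}\Big)^{\ubd^k X + \e}\cdot(\text{correction})
\]
is not something the capacity/kernel formalism gives you, and I do not see how to make it true: the profile $\ubd^k X$ is defined as a growth rate of $C_r^k(X)$, and what the kernel method delivers is a \emph{lower} bound on $C_r^k(X)$ in terms of $N_r(X)$, not an upper bound on $N_r(X)$ in terms of the profile. There is also a scale-selection issue: you choose $R$ to witness $\ubd X$ and then write $N_r(X)\gtrsim r^{-(\ubd X-\e)}$ for $r=R^{1/\theta}$, but for upper box dimension you only have this along a subsequence of scales, and there is no reason $R$ and $R^{1/\theta}$ should both belong to it. So the two-scale heuristic should be discarded entirely; it is a detour, not a sketch of the real argument.

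What the paper does (and what your third paragraph already points to) is cleaner and avoids both issues. Fix a single small scale $r$, take a maximal $r$-separated subset $\{x_i\}$ of $X$ of cardinality $N_r(X)$, and let $\mu$ be the uniform probability measure on $\{x_i\}$. Estimate the potential $\int \phi_r^k(x_i-y)\,d\mu(y)$ by decomposing into dyadic annuli $\{y:2^{n-1}r\le |x_i-y|<2^n r\}$ for $0\le n\le D$; split the sum at $B\approx(1-\theta)\log_2(1/r)$, so that for $n\ge B$ one has $2^n r\ge r^\theta$ and the upper Assouad spectrum bound $N_r(B(x_i,2^nr)\cap X)\lesssim 2^{n\alpha}$ applies, while for $n<B$ one falls back on the Assouad bound $\lesssim 2^{n\beta}$. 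Summing gives the energy bound and hence
\[
C_r^k(X)\ \gtrsim\ N_r(X)\,\min\{1,\ r^{\alpha-k},\ r^{(\beta-k)(1-\theta)}\},
\]
and taking $\limsup_{r\to 0}$ of $\log C_r^k(X)/(-\log r)$ now uses $N_r(X)$ at the \emph{same} scale $r$, so the upper box dimension appears without any auxiliary scale $R$. This is the step you should write out; the rest of your proposal is on target.
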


In fact, the same conclusion holds with $\ubd$ replaced by $\lbd$, see \cite{ffs}.  In the absence of a precise result, a natural question is when \eqref{ubtheta} improves on the general lower bounds from \eqref{boxdimsbounds}.  A careful analysis of the lower bound yields many such situations.  We state  one   here and leave others to the reader.

\begin{cor}\label{boxapp3}
Let $1\leq k<d$ be integers. If $X\subseteq \rn$ is bounded, then for almost all $V \in G(d,k)$,
\[
\ubd P_V(X) \geq \ubd X - \max\{0, \qad X - k\}.
\]
In particular, if $\qad X \leq \min\{k, \ubd X\}$, then for almost all $V \in G(d,k)$,
\[
\ubd P_V (X) = \min\{k, \ubd X\}.
\]
\end{cor}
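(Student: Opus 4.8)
The plan is to derive Corollary \ref{boxapp3} from Theorem \ref{boxapp0} by letting the interpolation parameter tend to $1$. The only input I need about the upper Assouad spectrum beyond Theorem \ref{boxapp0} itself is the uniform bound $\overline{\dim}_\textup{A}^\theta X \leq \qad X$ recorded in the excerpt, together with the trivial observation that $\ad X \leq d < \infty$, so that the term $(\ad X - k)(1-\theta)$ appearing in \eqref{ubtheta} vanishes as $\theta \nearrow 1$. Thus, morally, the ``bad'' term $\max\{0,\ \overline{\dim}_\textup{A}^\theta X - k,\ (\ad X - k)(1-\theta)\}$ collapses to $\max\{0,\ \qad X - k\}$ in the limit, which is exactly the subtracted quantity in the Corollary.

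To make this precise I would fix a sequence $\theta_n \nearrow 1$ in $(0,1)$. For each $n$, Theorem \ref{boxapp0} provides a set $G_n \subseteq G(d,k)$ of full $\gamdk$-measure such that, for every $V \in G_n$,
\begin{align*}
\ubd P_V(X) \ &\geq \ \ubd X - \max\{0,\ \overline{\dim}_\textup{A}^{\theta_n} X - k,\ (\ad X - k)(1-\theta_n)\}\\
&\geq \ \ubd X - \max\{0,\ \qad X - k,\ (\ad X - k)(1-\theta_n)\},
\end{align*}
the second inequality using $\overline{\dim}_\textup{A}^{\theta_n} X \leq \qad X$. Set $G = \bigcap_n G_n$, still of full $\gamdk$-measure as a countable intersection of full-measure sets. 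Fixing $V \in G$, the above holds for every $n$; since $(\ad X - k)(1-\theta_n) \to 0$, for any $\eps > 0$ this term is eventually smaller than $\eps$, so $\ubd P_V(X) \geq \ubd X - \max\{0,\ \qad X - k\} - \eps$ for all large $n$, and letting $\eps \to 0$ gives the first displayed inequality of the Corollary for all $V \in G$. For the ``in particular'' statement, the hypothesis $\qad X \leq \min\{k, \ubd X\}$ forces $\qad X \leq k$, hence $\max\{0, \qad X - k\} = 0$, so the part just proved yields $\ubd P_V(X) \geq \ubd X$ for $\gamdk$-almost every $V$. On the other hand $P_V$ is $1$-Lipschitz and $P_V(X)$ lies in the $k$-dimensional subspace $V$, so $\ubd P_V(X) \leq \min\{k, \ubd X\}$ for every $V$ (box dimension does not increase under Lipschitz maps, and subsets of $\rk$ have upper box dimension at most $k$). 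Since $\ubd X \leq \qad X \leq k$ forces $\min\{k, \ubd X\} = \ubd X$, the two inequalities combine to give $\ubd P_V(X) = \ubd X = \min\{k, \ubd X\}$ for almost every $V$.

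I do not anticipate a real obstacle: the entire mathematical content lies in Theorem \ref{boxapp0}, and what remains is the routine device of intersecting countably many full-measure sets in order to pass a $\theta$-dependent ``almost every $V$'' statement to the limit, together with an elementary limit inside a $\max$ and the Lipschitz upper bound on box dimension. The one point genuinely worth a careful sentence is exactly this interchange of $\lim_{\theta \nearrow 1}$ with the ``almost every $V$'' quantifier, since a priori the exceptional set produced by Theorem \ref{boxapp0} is allowed to depend on $\theta$.
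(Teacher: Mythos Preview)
Your proof is correct and follows the paper's intended approach: pass to the limit in \eqref{ubtheta} and combine with the trivial Lipschitz upper bound. Note that the paper's proof reads ``letting $\theta \to 0$'', which is evidently a typo for $\theta \to 1$ (as you correctly take), and your explicit countable intersection of full-measure sets to handle the $\theta$-dependence of the exceptional set is a point the paper leaves implicit.
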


\begin{proof}
The lower bound follows from Theorem \ref{boxapp0} by letting $\theta \to 0$ in \eqref{ubtheta}.  The equality in the second case comes from the established lower bound and the upper bound from  \eqref{boxdimsbounds}.
\end{proof}

The following theorem relates the dimension profiles to the Assouad spectrum. Combined with Theorem \ref{mainA} this proves Theorem \ref{boxapp0}. Indeed, Theorem \ref{boxapp0} is immediate on substituting the inequalities of  Theorem \ref{boxapp} with $s=k$ in Theorem \ref{mainA}(i). It can also be used in conjunction with Theorem \ref{mainA} to establish results about the exceptional set in the box dimension analogue of Marstrand's theorem, but we refer the reader to \cite{ffs} for more on this aspect.

\begin{thm}\label{boxapp}
Let    $s\in (0,d]$  and $\theta \in (0,1)$. If $X\subseteq \rn$ is bounded then
\be\label{ineqprof}
\ubd^s X \geq \ubd  X - \max \{0, \ \overline{\dim}_\textup{A}^\theta X-s, \ (\ad X-s)(1-\theta)\}.
\ee
\end{thm}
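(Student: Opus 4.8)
The plan is to estimate the capacity $C_r^s(X)$ from below by exploiting the scaling structure of the kernel $\phi_r^s$ together with covering information at two scales, which is exactly what the (upper) Assouad spectrum controls. Fix $r>0$ small and write $R = r^\theta$, so that $r = R^{1/\theta}$; think of $R$ as the ``localisation scale'' and $r$ as the ``covering scale''. First I would take a near-optimal cover of $X$ by $N_R(X)$ sets of diameter $R$, and inside each such piece $B(x,R)\cap X$ I would place a probability measure built from a maximal $r$-separated subset, distributing mass uniformly over the $\gtrsim N_r(B(x,R)\cap X)$ points. Averaging these over the $N_R(X)$ pieces (with weights proportional to the local covering numbers) produces a single $\mu \in \mathcal{M}(X)$, and the task reduces to bounding its energy $\iint \phi_r^s(x-y)\,d\mu(x)\,d\mu(y)$ from above.

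The energy splits into a ``diagonal'' contribution from pairs of points lying in the same $R$-piece and an ``off-diagonal'' contribution from pairs in different $R$-pieces. For the off-diagonal part, $|x-y| \gtrsim R$, so $\phi_r^s(x-y) \lesssim (r/R)^s = R^{s(1/\theta - 1)}$, and this term is harmless after multiplying by the total mass $1$. For the diagonal part one uses that within a fixed $R$-piece the points are $r$-separated and contained in a ball of radius $R$; summing $\phi_r^s(x-y)$ over such configurations and normalising gives a bound of the shape $\max\{1, (R/r)^{s}/N_r(B(x,R)\cap X)\}$ up to constants, essentially because the kernel behaves like a (truncated) $s$-Riesz kernel on $r$-separated point sets in a ball of radius $R$. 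Here is where Theorem~\ref{mainA}'s hypothesis enters: by definition of $\overline{\dim}_\textup{A}^\theta X$ and of $\ad X$, for every $\eps>0$ we have $N_r(B(x,R)\cap X) \lesssim (R/r)^{\min\{\overline{\dim}_\textup{A}^\theta X,\, \ad X\}+\eps}$, but we also need a \emph{lower} bound $N_r(B(x,R)\cap X) \gtrsim (R/r)^{?}$ to make the diagonal term small — and since no uniform lower bound is available, the correct move is instead to \emph{choose which pieces to keep}: discard the $R$-pieces with small local covering number and keep only those witnessing $N_R(X)$ well, so that on the retained pieces $N_r(B(x,R)\cap X)$ is comparable to a fixed power of $R/r$ controlled by a dyadic pigeonholing argument. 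Carrying $\eps \to 0$ and $r\to 0$ then yields $\overline{\dim}_\textup{B}^s X \geq \overline{\dim}_\textup{B} X - (\text{loss})$, where the loss is the deficit between the two scales' exponents, and optimising the pigeonholed power against both the $\overline{\dim}_\textup{A}^\theta X - s$ and the $(\ad X - s)(1-\theta)$ thresholds produces exactly the maximum in \eqref{ineqprof}. The two terms in the max arise respectively from using the Assouad-spectrum bound at the fixed ratio $R/r = R^{1-1/\theta}$ and from using the crude Assouad bound scaled by the fraction $1-\theta$ of the available ``dynamic range''; the $0$ accounts for the trivial case $\overline{\dim}_\textup{B}^s X \le \overline{\dim}_\textup{B} X$ needing no correction.

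The main obstacle is the absence of any lower bound on local covering numbers: the Assouad spectrum and Assouad dimension are purely upper bounds on $N_r(B(x,R)\cap X)$, so one cannot simply say ``each $R$-piece contains many $r$-points''. The fix — a two-scale multiplicativity/pigeonholing argument, retaining only pieces whose local covering number sits in a favourable dyadic range and bounding their number below using $N_R(X) \cdot (\text{that range}) \gtrsim N_r(X) = R^{-(1/\theta)\overline{\dim}_\textup{B} X + o(1)}$ — is the technical heart of the proof, and getting the bookkeeping to interact correctly with both exceptional-exponent thresholds simultaneously is where the real care is needed. A secondary point is justifying that the kernel $\phi_r^s$ really does behave like a truncated $s$-Riesz kernel on $r$-separated sets inside an $R$-ball, i.e. that $\sum_{y}\phi_r^s(x-y) \lesssim \max\{1,(R/r)^s\}$ for an $r$-separated set in $B(x,R)$, which is a routine dyadic-annulus computation using $\#\{y : |x-y|\sim 2^j r\} \lesssim 2^{jd}$ and $s \le d$, and a matching normalisation by the square of the local mass.
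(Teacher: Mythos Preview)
Your approach has a conceptual error at its core: you believe that the Assouad spectrum and Assouad dimension, being only \emph{upper} bounds on $N_r(B(x,R)\cap X)$, create an obstacle requiring a pigeonholing workaround to manufacture lower bounds. In fact, upper bounds on local covering numbers are precisely what the argument needs, and the pigeonholing apparatus is entirely unnecessary.

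The paper's proof is far more direct. Take $\mu$ to be the uniform probability measure on a single maximal $r$-separated subset $\{x_i\}_{i=1}^{N_r(X)}$ of $X$---no two-scale construction. The potential of $\mu$ at each $x_i$ decomposes over dyadic annuli as
\[
\int \phi_r^s(x_i - y)\,d\mu(y) \ \lesssim \ \frac{1}{N_r(X)}\sum_{n=0}^{D} 2^{-ns}\, N_r\big(B(x_i, 2^n r)\cap X\big),
\]
so one requires an \emph{upper} bound on each $N_r(B(x_i, 2^n r)\cap X)$. Split the sum at $n = B \approx (1-\theta)\log_2(1/r)$, i.e.\ at scale $2^B r \approx r^\theta$. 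For $n \geq B$ the pair $(r, 2^n r)$ satisfies $r \leq (2^n r)^{1/\theta}$, so the upper Assouad spectrum gives $N_r(B(x_i, 2^n r)\cap X) \lesssim (2^n)^\alpha$ for any $\alpha > \overline{\dim}_\textup{A}^\theta X$; for $n < B$ one uses the Assouad bound with exponent $\beta > \ad X$. Summing yields energy $\lesssim \max\{1,\, r^{-(\alpha - s)},\, r^{-(\beta - s)(1-\theta)}\}/N_r(X)$, hence the capacity lower bound, and the three terms in the maximum appear immediately with no optimisation step.

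Your proposal also contains two technical gaps. First, points lying in distinct $R$-pieces of a \emph{cover} need not satisfy $|x-y|\gtrsim R$ (adjacent pieces can overlap), so the off-diagonal estimate $\phi_r^s(x-y)\lesssim (r/R)^s$ is unjustified as stated. Second, your ``secondary point'' bounds the diagonal sum using only the trivial packing exponent $d$, which with $s\leq d$ gives $\sum_y \phi_r^s(x-y)\lesssim (R/r)^{d-s}$, not $(R/r)^s$; this loses exactly the Assouad information that should produce the $(\ad X - s)(1-\theta)$ term. In the correct argument the Assouad dimension is not a fallback but the sharp control on the near annuli $r \leq |x-y| < r^\theta$, while the Assouad spectrum handles the far annuli $|x-y|\geq r^\theta$.
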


\begin{proof}
We may assume  without loss of generality that $|X|<1/2$.  Throughout this proof we write $N_r(E)$ to denote the maximal size of an $r$-separated subset of $E$. Fix   $\alpha> \overline{\dim}_\textup{A}^\theta X$, $\beta> \ad X$ and let  $C>0$ be a constant such that for all $0<r<R<1$ and $x \in X$
\[
N_r(B(x,R) \cap X) \leq C \left(\frac{R}{r}\right)^\beta,
\]
and  for all $0<r \leq R^{1/\theta}<R<1$ and $x \in X$
\[
N_r(B(x,R) \cap X) \leq C \left(\frac{R}{r}\right)^\alpha.
\]
Let  $0< r<1$ and  $\{x_i\}_{i=1}^{N_r(X)}$ be a maximal $r$-separated set of points in $X$.  Place a point mass of weight $1/N_r(X)$ at  each $x_i$ and let the measure $\mu$ be the sum of these point masses, noting that $\mu(X) = 1$.

Write $D= \lceil \log_2(2|X|r^{-1})\rceil$ and $B=\lceil (1-\theta)\log_2(r^{-1})\rceil$ noting that for sufficiently small $r$, $1\leq B<D$.     Recalling \eqref{ker}, for each $i$ the potential of $\mu$ at $x_i$ is
\begin{eqnarray*}
\int  \phi_r^s(x_i-y)d\mu(y)  &\leq& \sum_{n=0}^D 2^{-(n-1)s} \mu(B(x_i,2^n r)) \\ \\
&\leq& \sum_{n=0}^D 2^{-(n-1)s}\frac{1}{N_r(X)} N_r\big(B(x_i,2^n r)\cap X\big) \\ \\
&\leq&  \frac{2^s}{N_r(X)} \bigg( \sum_{n=B}^D 2^{-ns}C\Big(\frac{2^n r}{r}\Big)^\alpha \ + \ \sum_{n=0}^{B-1} 2^{-ns}C\Big(\frac{2^n r}{r}\Big)^\beta\bigg) \\ \\
&\leq& c\, \frac{\max\{1, \ r^{-(\alpha-s)}, \ r^{-(\beta -s)(1-\theta)}\}}{N_r(X) }
 \end{eqnarray*}
for a constant $c$ which is independent of $r$. Summing over the $x_i$, the energy of $\mu$ is
$$\int \int \phi_r^s(x-y)d\mu(x)d\mu(y) \leq c\frac{\max\{1, \ r^{-(\alpha-s)}, \ r^{-(\beta-s)(1-\theta)}\}}{N_r(X) }$$
and so the capacity $C_r^s(X)$ satisfies
\[
C_r^s(X)\geq c^{-1}N_r(X) \min\{1, \ r^{(\alpha-s)}, \ r^{(\beta-s)(1-\theta)}\} .
\]
Thus
\begin{eqnarray*}
 \overline{\mbox{\rm dim}}_{\rm B}^s X  &=&   \varlimsup_{r\to 0} \frac{\log  C_r^s(X)}{-\log r}\\ \\
& \geq& \varlimsup_{r\to 0} \frac{\log \left(c^{-1}N_r(X) \min\{1, \ r^{(\alpha-s)}, \ r^{(\beta-s)(1-\theta)}\} \right)}{-\log r}\\ \\
& =& \ubd  X - \max \{0, \ \alpha-s, \ (\beta-s)(1-\theta)\}.
\end{eqnarray*}
The conclusion  follows on taking $\alpha$ and $\beta$ arbitrarily close to $\overline{\dim}_\textup{A}^\theta F$ and $\ad F$ respectively.
\end{proof}

\section{Using the intermediate dimensions to study box  dimension profiles} \label{intsec}

In this section we exhibit work from \cite{bff21}.  In particular, we introduce a capacity approach to studying intermediate dimensions and use this to state a Marstrand theorem for intermediate dimensions; see Theorem \ref{main}.  We use this to establish an intriguing result which relates the box dimensions of projections to continuity of the intermediate dimensions at 0; see Corollary \ref{cor4}.

Throughout this section, let $\theta \in (0, 1]$ and $k \in \{1, \dots, d\}$. For $0 \leq s \leq k$ and $0<r <1$, define   kernels
\be\label{ker}
{\phi}_{r, \theta}^{s, k}(x) = \begin{cases} 
      1 & 0\leq |x| < r \\
      \big(\frac{r}{|x|}\big)^s & r\leq |x| < r^\theta   \\
      \frac{r^{\theta(k-s) + s}}{|x|^k}\ & r^\theta \leq |x|
   \end{cases} \qquad (x\in \R^d).
\ee
When $s = k$ these  correspond  to the kernel $\phi_r^k(x)$ used in \cite{falconerprofile} in the context of box  dimensions; see \eqref{ker}.    Letting $\mathcal{M}(X)$ denote the set of Borel probability measures supported on $X$, we say that the \emph{energy} of $\mu \in \mathcal{M}(X)$ with respect to $\phi_{r, \theta}^{s, k}$ is  
\begin{equation*}
\int\int \phi_{r, \theta}^{s, k}(x - y) \,d\mu(x)d\mu(y)
\end{equation*}
and the {\em potential} of $\mu$ at $x \in \R^d$ is
\begin{equation*}
\int \phi_{r, \theta}^{s, k}(x - y)\,d\mu(y).
\end{equation*}
We define the \emph{capacity} $C_{r, \theta}^{s, k}(X)$ of a compact set $X$ to be the reciprocal of the minimum energy achieved by probability measures on $X$, that is
\begin{equation*}
{C_{r, \theta}^{s, k}(X)} = \left(\inf\limits_{\mu \in \mathcal{M}(X)} \int\int \phi_{r, \theta}^{s, k}(x - y) \,d\mu(x)d\mu(y)\right)^{-1}.
\end{equation*}
Since $\phi_{r, \theta}^{s, k}(x)$ is continuous in $x$ and strictly positive and $X$ is compact, $C_{r,\theta}^{s, k}(X)$ is positive and finite. For bounded, but not necessarily closed, sets we take the capacity  of a set to be that of its closure.   Once again this should be compared with the potential theoretic approach to Hausdorff dimension described in Section \ref{intro}  and also  the potential theoretic approach to box dimension described in Section \ref{boxpotential}. 
 
For each integer $1 \leq k \leq d$, we define the \emph{lower intermediate dimension profile} $\lid^k X$  of $X \subset \R^d$ to be   the unique  $s\in [0,k] $  such that  
\[
 \lowlim\limits_{r \rightarrow 0}\frac{\log C_{r, \theta}^{s, k}(X)}{-\log r} = s
\]
and the \emph{upper intermediate dimension profile} $\uid^k X$ to be the unique $s\in [0,k] $  such that  
\[
\uplim\limits_{r \rightarrow 0}\frac{\log C_{r, \theta}^{s, k}(X)}{-\log r} = s.\]
The fact that these notions are well-defined is established precisely in \cite{bff21} using appropriate variants on standard potential theoretic arguments.  Further, observe that the intermediate dimension profiles are increasing in $k$. This follows immediately noting that the kernels $ \phi_{r, \theta}^{t, k}(x)$ are   \emph{decreasing} in $k$.

We can now  characterise intermediate dimensions of sets $X\subset \R^d$  in terms of  the capacities $C_{r, \theta}^{s, d}(X)$ via the intermediate dimension profiles.

\begin{thm}\label{equivdim}
Let $X \subset \R^d$ be bounded and $\theta \in (0, 1]$. Then
\begin{equation*}
\lid X = \lid^{d} X
\end{equation*}
and
\begin{equation*}
\uid X = \uid^{d} X.
\end{equation*}
\end{thm}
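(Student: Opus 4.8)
The plan is to adapt Falconer's capacity method for box dimension from \cite{falconerprofile}---specifically the identity there between the $d$-dimensional box profile and the upper box dimension---to the intermediate setting, with the kernels $\phi_{r,\theta}^{s,d}$ replacing $\phi_r^d$. Parametrising covers by their fine scale $r$, so that the range $r\le|U_i|\le r^\theta$ matches the two transition radii of $\phi_{r,\theta}^{s,d}$, one has $\uid X\le s$ exactly when $\inf\{\sum_i|U_i|^s:\{U_i\}\text{ covers }X,\ r\le|U_i|\le r^\theta\}\to0$ as $r\to0$; hence this covering sum tends to $0$ for every $s>\uid X$ and has strictly positive $\limsup$ for every $s<\uid X$ (and similarly with $\liminf$ for $\lid$). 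Writing $g(s)=\uplim_{r\to0}\tfrac{\log C_{r,\theta}^{s,d}(X)}{-\log r}$, note that $\phi_{r,\theta}^{s,d}$ is non-increasing in $s$---on the outer branch because the exponent $\theta(d-s)+s$ increases with $s$ while $r<1$---so $g$ is non-decreasing; and $\uid^d X$ is by definition the unique fixed point of $g$ on $[0,d]$, this being exactly the well-definedness proved in \cite{bff21}. It therefore suffices to show that $\uid X$ is a fixed point of $g$, and by monotonicity of $g$ it is enough to prove $g(s)\le s$ for all $s>\uid X$ and $g(s)\ge s$ for all $s<\uid X$. Since both estimates will hold at each individual scale $r$, the statement for $\lid$ will follow verbatim with ``for all small $r$'' replaced by ``along a sequence $r_n\to0$'', and one reduces throughout to $X$ compact by passing to its closure.

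For the first estimate I would fix $s>\uid X$, so that for all small $r$ there is a cover $\{U_i\}$ of $X$ with $r\le|U_i|\le r^\theta$ and $\sum_i|U_i|^s\le1$; after a routine refinement we may assume it has bounded overlap. If $x,y$ lie in a common $U_i$ then $|x-y|\le|U_i|\le r^\theta$, so $\phi_{r,\theta}^{s,d}(x-y)\ge(r/|U_i|)^s$, and hence for every $\nu\in\mathcal{M}(X)$,
\[
\int\int\phi_{r,\theta}^{s,d}(x-y)\,d\nu(x)\,d\nu(y)\ \gtrsim\ \sum_i\Big(\tfrac{r}{|U_i|}\Big)^s\nu(U_i)^2\ \ge\ \frac{r^s\big(\sum_i\nu(U_i)\big)^2}{\sum_i|U_i|^s}\ \ge\ r^s
\]
by a weighted Cauchy--Schwarz inequality together with $\sum_i\nu(U_i)\ge\nu(X)=1$. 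This gives $C_{r,\theta}^{s,d}(X)\lesssim r^{-s}$, hence $g(s)\le s$.

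The substantial half is to prove $g(s)\ge s$ for $s<\uid X$. Here I would fix such an $s$; then along a sequence $r_n\to0$ no cover of $X$ with $r_n\le|U_i|\le r_n^\theta$ has $\sum_i|U_i|^s$ below a fixed $\eta>0$, and a Frostman-type lemma adapted to this two-sided scale window should produce measures $\mu_n\in\mathcal{M}(\R^d)$ supported on $X$ with $\mu_n(X)\gtrsim\eta$, $\mu_n(Q)\lesssim|Q|^s$ for dyadic cubes $Q$ of side-length in $[r_n,r_n^\theta]$, and the capped estimates $\mu_n(Q)\lesssim r_n^s$ for smaller cubes and $\mu_n(Q)\lesssim(|Q|/r_n^\theta)^d(r_n^\theta)^s$ for larger ones. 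The decisive point is then a direct computation: decomposing the potential $\int\phi_{r_n,\theta}^{s,d}(x-y)\,d\mu_n(y)$ over the dyadic annuli $2^jr_n\le|x-y|<2^{j+1}r_n$, \emph{each} annulus contributes $\lesssim r_n^s$---in the innermost range because $\phi\le1$ and $\mu_n$ is $s$-Frostman, in the middle range because $(r_n/|x-y|)^s$ against an $s$-Frostman mass yields $r_n^s$ per scale, and in the outermost range precisely because the exponent $\theta(d-s)+s$ is engineered to cancel $|x-y|^{-d}$ against the capped Frostman bound. Summing over the $O(\log(1/r_n))$ relevant scales gives a potential $\lesssim r_n^s\log(1/r_n)$ uniformly in $x$, so the energy of $\mu_n/\mu_n(X)$ is $\lesssim\eta^{-2}r_n^s\log(1/r_n)$, whence $C_{r_n,\theta}^{s,d}(X)\gtrsim\eta^2 r_n^{-s}/\log(1/r_n)$ and $g(s)\ge s$. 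With both estimates in place, monotonicity of $g$ squeezes $g(\uid X)=\uid X$, so $\uid X=\uid^d X$, and the argument for $\lid X=\lid^d X$ is identical.

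The hard part will be the scale-restricted Frostman lemma invoked above: from the absence of an efficient cover by sets whose diameters are confined to $[r,r^\theta]$ one must manufacture a measure that is genuinely $s$-Frostman \emph{throughout that window} and correctly controlled at both smaller and larger scales. In the single-scale box-dimension argument of \cite{falconerprofile} this role is played by the classical mass-distribution principle; here the two-sided diameter constraint makes the underlying net-measure construction more delicate, and securing the right behaviour at the two endpoint scales $r$ and $r^\theta$ is precisely the potential-theoretic input for intermediate dimensions developed in \cite{bff21}. A lesser technical point is refining an arbitrary cover to one of bounded overlap without inflating $\sum_i|U_i|^s$, and absorbing the logarithmic factors, which are harmless under the logarithmic ratio defining $g$.
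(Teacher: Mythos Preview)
Your approach is correct and coincides with the strategy of \cite{bff21}, to which the paper defers for the full argument: bound $C_{r,\theta}^{s,d}(X)$ above via covers when $s>\uid X$, below via a scale-restricted Frostman measure when $s<\uid X$, and identify $\uid X$ with the unique fixed point $\uid^d X$. The one place where you genuinely diverge from what the paper exhibits is the easy direction. You lower-bound the energy of an \emph{arbitrary} test measure by Cauchy--Schwarz over a bounded-overlap refinement of the cover, whereas Lemma~\ref{approxeq} instead takes the \emph{equilibrium} measure $\mu$, whose potential equals $1/C_{r,\theta}^{s,d}(X)$ on a set of full $\mu$-measure, and reads off $\mu(U_i)\le(|U_i|/r)^s/C_{r,\theta}^{s,d}(X)$ directly; summing over the cover gives $\sum_i|U_i|^s\ge r^sC_{r,\theta}^{s,d}(X)$ with no refinement needed. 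Both routes yield the same inequality, but the equilibrium-measure version is cleaner precisely because it sidesteps the bounded-overlap step and the attendant worry that fattened covering sets may stray past the threshold $r^\theta$. Your sketch of the hard direction---the two-sided Frostman construction and the annular potential calculation showing each dyadic shell contributes $\lesssim r^s$---is exactly the content that \cite{bff21} supplies and that the present paper omits.
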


This characterisation was proved in \cite{bff21}.  Below we extract a small piece of that proof which shows how to relate capacities to the covers used in the definition of intermediate dimensions.  This is just supposed to give the reader a flavour of the argument and we refer to \cite{bff21} for the full details.   We do not lose any generality by considering only compact sets since the intermediate dimensions are stable under taking closure for $\theta \in (0,1]$.

\begin{lma}\label{approxeq}
Let $X \subset \R^d$ be compact, $\theta \in (0, 1]$, and $0\leq s \leq d$. Then  for all $0<r\leq 1$ and all finite covers  $\{U_i\}_{i }$ of $X$ by sets of diameters $r \leq |U_i| \leq r^\theta$, 
\[
\sum\limits_{i } |U_i|^s \geq r^sC_{r,\theta}^{s, d}(X).
\]
\end{lma}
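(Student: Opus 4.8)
The plan is to prove the dual reformulation. By the definition of $C_{r,\theta}^{s,d}(X)$ as the reciprocal of the minimal $\phi_{r,\theta}^{s,d}$-energy of probability measures on $X$, the claimed inequality $\sum_i |U_i|^s \ge r^s C_{r,\theta}^{s,d}(X)$ is equivalent to the statement that
\[
\iint \phi_{r,\theta}^{s,d}(x-y)\,d\mu(x)\,d\mu(y) \;\ge\; \frac{r^s}{\sum_i |U_i|^s}
\]
for \emph{every} $\mu \in \mathcal{M}(X)$. So I would fix an arbitrary $\mu \in \mathcal{M}(X)$ together with a finite cover $\{U_i\}$ satisfying $r \le |U_i| \le r^\theta$, and bound the energy of $\mu$ from below; this is a standard Frostman/capacity-type argument.

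First I would turn the cover into a disjoint Borel partition of $X$: replacing each $U_i$ by its closure changes neither the diameters nor the covering property, so we may assume each $U_i$ is closed, and then set $E_i = (U_i \cap X)\setminus\bigcup_{j<i}U_j$. These $E_i$ are Borel, pairwise disjoint, satisfy $|E_i|\le|U_i|\le r^\theta$, and cover $X$, so $\sum_i\mu(E_i)=\mu(X)=1$. The second ingredient is a pointwise lower bound on the kernel: if $x,y$ lie in a common set of diameter at most $r^\theta$ then $\phi_{r,\theta}^{s,d}(x-y)\ge (r/|U_i|)^s$ for the corresponding $U_i$. Indeed, writing $t=|x-y|\le|U_i|\le r^\theta$, either $t<r$ and $\phi_{r,\theta}^{s,d}(x-y)=1\ge(r/|U_i|)^s$ (using $|U_i|\ge r$), or $r\le t\le r^\theta$ and $\phi_{r,\theta}^{s,d}(x-y)=(r/t)^s\ge(r/|U_i|)^s$ (using $t\le|U_i|$); this is exactly where both hypotheses $r\le|U_i|$ and $|U_i|\le r^\theta$ are used.

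Putting these together, since the sets $E_i\times E_i$ are pairwise disjoint and $\phi_{r,\theta}^{s,d}\ge 0$,
\[
\iint \phi_{r,\theta}^{s,d}(x-y)\,d\mu(x)\,d\mu(y)\;\ge\;\sum_i\iint_{E_i\times E_i}\phi_{r,\theta}^{s,d}(x-y)\,d\mu(x)\,d\mu(y)\;\ge\;r^s\sum_i\frac{\mu(E_i)^2}{|U_i|^s},
\]
and then Cauchy--Schwarz applied to $1=\sum_i\mu(E_i)=\sum_i\bigl(\mu(E_i)/|U_i|^{s/2}\bigr)|U_i|^{s/2}$ gives $\sum_i\mu(E_i)^2/|U_i|^s\ge 1/\sum_i|U_i|^s$, which is the desired lower bound on the energy of $\mu$. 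Taking the infimum over $\mu\in\mathcal{M}(X)$ then yields the lemma. I do not expect a genuine obstacle here: the only points needing a little care are making the passage to a disjoint Borel partition rigorous (so that $\sum_i\mu(E_i)=1$ holds exactly) and the small case-check in the kernel estimate, which is precisely what forces the diameter constraints $r\le|U_i|\le r^\theta$ into the hypothesis.
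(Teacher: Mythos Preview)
Your proof is correct and takes a genuinely different route from the paper's.  The paper invokes the existence of an equilibrium measure $\mu$ for which the potential $\int \phi_{r,\theta}^{s,d}(x-y)\,d\mu(y)$ equals $\gamma = 1/C_{r,\theta}^{s,d}(X)$ on a set $X_0$ of full $\mu$-measure (citing \cite[Lemma~2.1]{falconerprofile}); from this it extracts the Frostman-type ball bound $\mu(B(x,\delta)) \le \gamma(\delta/r)^s$ for $r \le \delta \le r^\theta$ and $x\in X_0$, and then sums $\mu(U_i) \le \mu(B(x_i,|U_i|))$ over the cover to reach $1 \le r^{-s}\gamma\sum_i|U_i|^s$.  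You instead bound the energy of an \emph{arbitrary} $\mu\in\mathcal{M}(X)$ from below by restricting to the diagonal blocks $E_i\times E_i$ of a disjointification of the cover, using the pointwise kernel estimate $\phi_{r,\theta}^{s,d}(x-y)\ge(r/|U_i|)^s$ on each block, and closing with Cauchy--Schwarz.  Your argument is more self-contained, since it does not rely on the equilibrium-measure lemma; the paper's argument is marginally shorter once that lemma is available and yields as a by-product the ball condition for the equilibrium measure, which is sometimes useful in its own right.
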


\begin{proof}
 By standard    potential theoretic  arguments (see e.g.~\cite[Lemma 2.1]{falconerprofile}) there exists an equilibrium measure $\mu \in \mathcal{M}(X)$ and a set $X_0$ with $\mu(X_0) = 1$ such that
$$\int \phi_{r, \theta}^{s, d}(x - y) d\mu(y) = \frac{1}{C_{r, \theta}^{s, d}(X)} =: \gamma$$
for all $x \in X_0$. Let $r \leq \delta \leq r^\theta$ and $x \in X_0$. Then
\begin{equation}\label{zz}
\gamma = \int {\phi}_{r, \theta}^{s, d}(x - y) d\mu(y) 
\geq \int \left(\frac{r}{\delta}\right)^s1_{B(0, \delta)}(x - y) d\mu(y) 
\geq \left(\frac{r}{\delta}\right)^s \mu(B(x, \delta)).
\end{equation}
Let $\{U_i\}_{i }$ be a finite cover of $X$ by sets of diameters $r \leq |U_i| \leq r^\theta$ and define $\mathcal{I} = \{i : U_i \cap X_0 \neq \emptyset \}$. Then for each $i \in \mathcal{I}$, there exists $x_i \in U_i \cap X_0$ so that $U_i \subset B(x_i, |U_i|)$. Hence
\begin{equation*}
1 = \mu(X_0) \leq \sum\limits_{i \in \mathcal{I}} \mu(U_i) 
\leq  \sum\limits_{i \in \mathcal{I}} \mu(B(x_i, |U_i|))\leq r^{-s}\gamma \sum\limits_{i \in \mathcal{I}}|U_i|^s
\end{equation*}
by (\ref{zz}), and so
\begin{equation*}
\sum\limits_{i } |U_i|^s \geq r^sC_{r,\theta}^{s, d}(X)
\end{equation*}
as required.
\end{proof}

The main theorem proved in \cite{bff21} is the following.  We almost have the ingredients in place to prove it, but we omit the proof and refer the reader to \cite{bff21} for the details.  It establishes a Marstrand theorem for the intermediate dimensions.

\begin{thm}\label{main}
Let $X \subset \R^d$ be bounded. Then, for  all $V \in G(d, k)$
\begin{equation}\label{mains}
\lid   P_V (X) \leq \lid^{k} X \quad\mathrm{and}\quad \uid   P_V (X) \leq \uid^{k} X
\end{equation}
for all    $\theta\in (0,1]$.  Moreover, for $\gamma_{d, k}$-almost all $V \in G(d, k)$,
\begin{equation}\label{mainas}
\lid   P_V (X) = \lid^{k} X \quad\mathrm{and}\quad \uid   P_V (X) = \uid^{k} X
\end{equation}
for  all $\theta\in (0,1]$.
\end{thm}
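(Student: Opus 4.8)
The plan is to transplant the capacity-based proof of the box-dimension profile theorem in \cite{falconerprofile} to the kernels $\phi_{r,\theta}^{s,k}$. Since intermediate dimensions are stable under closure for $\theta\in(0,1]$, one may assume $X$ is compact. Two features of the kernel will be used throughout: $0\le\phi_{r,\theta}^{s,k}\le 1$, and $\phi_{r,\theta}^{s,k}(x)$ is radial and non-increasing both in $|x|$ and in $s$ (so $C_{r,\theta}^{s,k}\ge 1$ and is non-decreasing in $s$). One also uses the structural fact, implicit in the well-definedness of the profiles in \cite{bff21}, that for any bounded $E$ the function $g_E(s):=\varlimsup_{r\to 0}\frac{\log C_{r,\theta}^{s,k}(E)}{-\log r}$ is non-decreasing, $1$-Lipschitz, and meets the diagonal exactly at $s=\uid^k E$, lying strictly above it for $s<\uid^k E$ and strictly below for $s>\uid^k E$; likewise with $\varliminf$ and $\lid^k E$. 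I give the argument for the upper intermediate dimension; the lower case is the same with $\varlimsup$ replaced by $\varliminf$ and a dyadic scale sequence $r_n=2^{-n}$ in place of the sparse sequence used below (legitimate since $\phi_{2r,\theta}^{s,k}\le C(s,k,\theta)\,\phi_{r,\theta}^{s,k}$, so capacities at comparable scales are comparable).

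\emph{The upper bound for every $V$.} As $P_V$ is $1$-Lipschitz and the kernel is non-increasing, $\phi_{r,\theta}^{s,k}(P_Vx-P_Vy)\ge\phi_{r,\theta}^{s,k}(x-y)$ for all $x,y$. By a standard lifting of measures through the continuous surjection $P_V\colon X\to P_V(X)$, every $\nu\in\M(P_V(X))$ is of the form $P_V\mu$ with $\mu\in\M(X)$, so the energy defining $1/C_{r,\theta}^{s,k}(P_V(X))$, minimised over such $\nu$, is bounded below by the energy defining $1/C_{r,\theta}^{s,k}(X)$; hence $C_{r,\theta}^{s,k}(P_V(X))\le C_{r,\theta}^{s,k}(X)$ for all $s,r$. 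Dividing logarithms by $-\log r>0$ and taking $\varlimsup$ gives $g_{P_V(X)}(s)\le g_X(s)$ for every $s$; at $s=\uid^k X$ the right-hand side is $\uid^k X$, so $g_{P_V(X)}(\uid^k X)\le\uid^k X$, and the crossing structure forces $\uid^k P_V(X)\le\uid^k X$. By Theorem~\ref{equivdim} applied in $\rk$, $\uid^k P_V(X)=\uid P_V(X)$, which is \eqref{mains}.

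\emph{The key estimate.} The heart of the matter is the integral-geometric bound: for $0\le s\le k$ there is $C=C(d,k,s,\theta)$ such that, for $|z|\le 1$,
\[
\int_{\gdk}\phi_{r,\theta}^{s,k}(P_Vz)\,d\gamdk(V)\ \le\ C\log(e/r)\,\phi_{r,\theta}^{s,k}(z).
\]
I would prove this by noting that, for fixed $z$ with $|z|=\rho$, the law of $|P_Vz|/\rho$ under $\gamdk$ has density comparable to $t^{k-1}$ on $[0,1]$, so the left-hand side is comparable to $\rho^{-k}\int_0^\rho\phi_{r,\theta}^{s,k}(u)\,u^{k-1}\,du$, and then comparing this with $\phi_{r,\theta}^{s,k}(\rho)$ by splitting the $u$-integral at $u=r$ and $u=r^\theta$. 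The only branch of the kernel producing a genuine logarithm is the outer one, $r^{\theta(k-s)+s}u^{-k}$, whose contribution is $r^{\theta(k-s)+s}\int_{r^\theta}^{\rho}u^{-1}\,du\asymp r^{\theta(k-s)+s}\log(\rho/r^\theta)$, absorbed into $\log(e/r)\,\phi_{r,\theta}^{s,k}(\rho)$ --- exactly as in the box case $s=k$ of \cite{falconerprofile}.

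\emph{The almost sure lower bound, and all $\theta$ at once.} Put $s^*=\uid^k X$ and choose $r_n\downarrow0$ with $\frac{\log C_{r_n,\theta}^{s^*,k}(X)}{-\log r_n}\to s^*$; thinning, arrange $\log(e/r_n)\ge e^n$. Let $\mu_n\in\M(X)$ be an equilibrium measure for $\phi_{r_n,\theta}^{s^*,k}$ on $X$ (it exists by compactness). Using $P_V\mu_n\in\M(P_V(X))$, then Fubini and the key estimate,
\begin{align*}
\int_{\gdk}\frac{d\gamdk(V)}{C_{r_n,\theta}^{s^*,k}(P_V(X))}
&\le \iint\Bigl(\int_{\gdk}\phi_{r_n,\theta}^{s^*,k}(P_V(x-y))\,d\gamdk(V)\Bigr)d\mu_n(x)\,d\mu_n(y) \\
&\le \frac{C\log(e/r_n)}{C_{r_n,\theta}^{s^*,k}(X)}.
\end{align*}
Writing $C_{r_n,\theta}^{s^*,k}(X)=r_n^{-s^*+\delta_n}$ with $\delta_n\to0$, the right-hand side is at most a constant times $r_n^{s^*-\eps_n}$ with $\eps_n:=\delta_n+\frac{\log\log(e/r_n)}{\log(1/r_n)}\to0$, so Chebyshev gives $\gamdk\bigl(\{V:C_{r_n,\theta}^{s^*,k}(P_V(X))<r_n^{-(s^*-2\eps_n)}\}\bigr)\lesssim r_n^{\eps_n}\le 1/\log(e/r_n)\le e^{-n}$, which is summable. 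Borel--Cantelli then yields, for $\gamdk$-a.e. $V$, $C_{r_n,\theta}^{s^*,k}(P_V(X))\ge r_n^{-(s^*-2\eps_n)}$ for all large $n$, hence $g_{P_V(X)}(s^*)\ge s^*$, so the crossing structure for $P_V(X)$ forces $\uid^k P_V(X)\ge s^*=\uid^k X$. With the previous step and Theorem~\ref{equivdim} this gives $\uid P_V(X)=\uid^k X$ for $\gamdk$-a.e. $V$. This null set depends on $\theta$; intersecting it over rational $\theta\in(0,1]$ and using continuity in $\theta$ on $(0,1]$ of both $\uid^k X$ and $\uid P_V(X)$ (\cite{intdims,bff21}) upgrades the equality to all $\theta\in(0,1]$ simultaneously, and the $\varliminf$ argument does the same for $\lid$, giving \eqref{mainas}.

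\emph{Main obstacle.} The real work is the key estimate --- controlling the spherical average of $\phi_{r,\theta}^{s,k}\circ P_V$ by $\phi_{r,\theta}^{s,k}$ up to a logarithm, through the distribution of $|P_Vz|$ and a careful but elementary analysis of the three-piece kernel. The remaining ingredients --- lifting measures through $P_V$, Chebyshev, Borel--Cantelli, and the fixed-point geometry of the profiles --- are soft.
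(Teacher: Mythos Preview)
The paper does not actually prove this theorem; it introduces the kernels $\phi_{r,\theta}^{s,k}$, the capacities $C_{r,\theta}^{s,k}$, states Theorem~\ref{equivdim} and Lemma~\ref{approxeq}, and then refers to \cite{bff21} for the full argument. Your proposal is precisely that argument: transplant the capacity method of \cite{falconerprofile} to the three-piece kernels, using the $1$-Lipschitz property of $P_V$ (and a lifting of measures) for the universal upper bound, the integral-geometric bound on the $\gamma_{d,k}$-average of $\phi_{r,\theta}^{s,k}\circ P_V$ for the key estimate, then Fubini/Chebyshev/Borel--Cantelli for the almost-sure lower bound, and finally continuity in $\theta$ over rationals to get all $\theta$ simultaneously. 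This is the intended route and your sketch of the key estimate is correct.

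There is one genuine slip in the Borel--Cantelli step. You assert $r_n^{\epsilon_n}\le 1/\log(e/r_n)$, but unwinding gives $r_n^{\epsilon_n}=r_n^{\delta_n}/\log(e/r_n)$, so you are implicitly using $\delta_n\ge 0$. Nothing in your choice of $r_n$ guarantees this: the sequence realising the $\varlimsup$ may have $\delta_n<0$, and with $\log(1/r_n)\ge e^n$ even $|\delta_n|<1/n$ makes $r_n^{\delta_n}$ enormous, destroying summability. The standard repair is to insert a fixed slack $\eta>0$: since $\varlimsup_{r\to 0}\log C_{r,\theta}^{s^*,k}(X)/(-\log r)=s^*$, pick $r_n\to 0$ with $C_{r_n,\theta}^{s^*,k}(X)\ge r_n^{-(s^*-\eta)}$ and thin so that $\sum_n\log(e/r_n)\,r_n^{\eta}<\infty$; then your integral inequality and Markov give, for $\gamma_{d,k}$-a.e.\ $V$, $C_{r_n,\theta}^{s^*,k}(P_V(X))\ge r_n^{-(s^*-2\eta)}$ for all large $n$, hence $g_{P_V(X)}(s^*)\ge s^*-2\eta$, and letting $\eta\downarrow 0$ along a sequence finishes. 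The same device (with $r_n=2^{-n}$ and the scale-comparability you noted) handles the $\varliminf$ case; the rest of your argument is fine.
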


The next result allows us to pass continuity of the intermediate dimensions at zero to typical projections.  This will be useful in subsequent applications.

\begin{cor} \label{cor1}
Let $X \subset \mathbb{R}^d$ be a bounded set such that $\lid X$ is continuous at $\theta=0$.  If $V \in G(d, k)$ is such that $\hd P_V (X) \geq \min\{k, \hd X\},$ then $\lid  P_V (X)$ is continuous at  $\theta=0$.  In particular, $\lid  P_V (X)$ is continuous at $\theta =0$ for    $\gamma_{d, k}$-almost all $V \in G(d, k)$. A similar result holds for the upper intermediate dimensions.
\end{cor}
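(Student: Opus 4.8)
The plan is to sandwich $\lid P_V(X)$, for $\theta$ near $0$, between the constant $\hd P_V(X)$ from below and the $k$-dimensional intermediate dimension profile $\lid^{k} X$ from above, and then to establish separately that $\lim_{\theta\to 0^+}\lid^{k}X=\min\{k,\hd X\}$. Beyond Theorems \ref{equivdim} and \ref{main}, the inputs are the classical Marstrand--Mattila projection theorem and three soft facts: intermediate dimensions are non-decreasing in $\theta\in[0,1]$ with value $\hd$ at $\theta=0$; the profiles are non-decreasing in the (integer) profile parameter, so $\lid^{k}X\le\lid^{d}X$; and $\hd P_V(X)\le\min\{k,\hd X\}$ for every $V$ by Lipschitz invariance.

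I would first observe that any $V$ with $\hd P_V(X)\ge\min\{k,\hd X\}$ in fact satisfies $\hd P_V(X)=\min\{k,\hd X\}$. Next, for every $\theta\in(0,1]$ and every $V\in G(d,k)$, monotonicity in $\theta$ gives $\lid P_V(X)\ge\hd P_V(X)$, while the ``for all $V$'' half of Theorem \ref{main} gives $\lid P_V(X)\le\lid^{k}X$. Hence, once the limit $\lim_{\theta\to 0^+}\lid^{k}X=\min\{k,\hd X\}$ is in hand, $\lid P_V(X)$ is pinched between $\min\{k,\hd X\}=\hd P_V(X)$, which is constant in $\theta$, and a quantity tending to $\min\{k,\hd X\}$; taking $\liminf$ and $\limsup$ as $\theta\to0^+$ then gives $\lim_{\theta\to0^+}\lid P_V(X)=\hd P_V(X)$, that is, continuity of $\lid P_V(X)$ at $\theta=0$.

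It therefore remains to prove
\[
\lim_{\theta\to 0^+}\lid^{k}X=\min\{k,\hd X\}.
\]
For the upper bound, $\lid^{k}X\le k$ trivially and $\lid^{k}X\le\lid^{d}X=\lid X$ by monotonicity in the profile parameter together with Theorem \ref{equivdim}; the standing hypothesis that $\lid X$ is continuous at $\theta=0$ then yields $\limsup_{\theta\to0^+}\lid^{k}X\le\min\{k,\hd X\}$. For the lower bound, the ``almost all $V$'' half of Theorem \ref{main} furnishes a $\gamma_{d,k}$-conull set of $V$ on which $\lid P_V(X)=\lid^{k}X$ for \emph{all} $\theta\in(0,1]$ at once, and Marstrand--Mattila furnishes a $\gamma_{d,k}$-conull set on which $\hd P_V(X)=\min\{k,\hd X\}$; picking $V$ in the (nonempty) intersection and using $\lid P_V(X)\ge\hd P_V(X)$ shows $\lid^{k}X\ge\min\{k,\hd X\}$ for all $\theta\in(0,1]$, hence $\liminf_{\theta\to0^+}\lid^{k}X\ge\min\{k,\hd X\}$. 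Combining the two bounds gives the displayed limit.

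The ``in particular'' clause is then immediate: Marstrand--Mattila gives $\hd P_V(X)=\min\{k,\hd X\}$, hence the required inequality, for $\gamma_{d,k}$-almost every $V$. The statement for the upper intermediate dimension is proved by the identical argument with $\uid$ replacing $\lid$ everywhere, invoking the $\uid$ halves of Theorems \ref{equivdim} and \ref{main} and the hypothesis that $\uid X$ is continuous at $0$. I do not expect a genuinely difficult step here; the one thing to watch is that the limit $\lim_{\theta\to0^+}\lid^{k}X=\min\{k,\hd X\}$ really does need the hypothesis on $X$ (equivalently, the Marstrand--Mattila input), since naively sending $\theta\to0$ inside the capacity definition only degenerates the kernels $\phi_{r,\theta}^{s,k}$ to the box-dimension-profile kernel $\phi_r^s$ of Section \ref{boxpotential}, which does not see Hausdorff dimension; and indeed $\lim_{\theta\to0^+}\lid^{k}X$ can exceed $\min\{k,\hd X\}$ in general --- that is precisely why $\lid P_V(X)$ can fail to be continuous at $0$.
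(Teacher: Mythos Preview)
Your proof is correct and follows essentially the same sandwich argument as the paper: for the given $V$ one has $\hd P_V(X)\le \lid P_V(X)\le \lid^{k}X\le \lid^{d}X=\lid X\to \hd X$ (with the obvious modification when $k\le\hd X$), which forces continuity at $0$. The only difference is that you take a small detour by proving the \emph{exact} limit $\lim_{\theta\to0^+}\lid^{k}X=\min\{k,\hd X\}$, whose lower-bound half (via the almost-sure part of Theorem~\ref{main} and Marstrand--Mattila) is not actually needed for the squeeze; the paper simply uses the upper bound $\lid^{k}X\le\lid X$ and lets both ends of the chain collapse to $\hd X$ (or $k$).
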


\begin{proof}
If $k\leq  \hd X$, then the result is immediate and so we may assume that $k>  \hd X$.  Then, for $\theta \in (0,1)$, using  Theorems \ref{equivdim}, \ref{main} and the assumption that $\lid X$ is continuous at $\theta=0$, we get
\[
\hd X \leq \hd  P_V (X) \leq  \lid  P_V (X) \leq   \lid^k X \leq   \lid^d  X =  \lid   X \to \hd X
\]
as $\theta \to 0$, which proves continuity of  $\lid  P_V (X) $ at $\theta =0$.   The final part of the result, concerning almost sure continuity at 0,  follows from the previous observation together with the Marstrand--Mattila projection theorem for Hausdorff dimensions.
\end{proof}

Next we recall the useful observation that if the box dimension of a set equals the ambient spatial dimension, then the intermediate dimensions must be constantly equal to the box dimension for all $\theta \in (0,1]$, see \cite[Proposition 2.4]{intdims}.   The  capacity approach presented here yields a simple proof, which we give now.
\begin{cor} \label{cor3}
If $X \subset \R^d$ is bounded and satisfies $\lbd X = d$, then  $
\lid X = \uid X =  d$ for all $\theta \in (0,1]$.  Similarly,  if $\ubd X = d$, then  $
 \uid X =  d$ for all $\theta \in (0,1]$.
\end{cor}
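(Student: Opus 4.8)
The plan is to reduce both statements to the box-dimension profile by means of Theorem \ref{equivdim} and then to observe that at the extreme parameter $s=k=d$ the kernel $\phi_{r,\theta}^{d,d}$ degenerates to the box-dimension kernel $\phi_r^d$ of Section \ref{boxpotential}. First I would dispose of the easy inequalities: since $\lid X$ and $\uid X$ are non-decreasing in $\theta$ and equal $\lbd X$, resp.\ $\ubd X$, at $\theta=1$, we have $\lid X\le\lbd X\le d$ and $\uid X\le\ubd X\le d$ for every $\theta\in(0,1]$, so in each claim it remains only to prove the matching lower bound. (As elsewhere in this section, we may assume $X$ compact, since the box and intermediate dimensions are unchanged on passing to the closure for $\theta\in(0,1]$.)

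The key computational step is the identity $\phi_{r,\theta}^{d,d}(x)=\phi_r^d(x)$, a one-line check of the three branches in the definition of $\phi_{r,\theta}^{s,k}$: when $s=k=d$ the exponent $\theta(k-s)+s$ appearing in the third branch collapses to $d$, so all three branches combine to give $\min\{1,(r/|x|)^d\}$, which is precisely $\phi_r^d(x)$. Hence $C_{r,\theta}^{d,d}(X)=C_r^d(X)$ for every $r\in(0,1)$ and every $\theta$, and \cite[Corollary~2.5]{falconerprofile} then yields
\[
\lowlim_{r\to0}\frac{\log C_{r,\theta}^{d,d}(X)}{-\log r}=\lbd X \qquad\text{and}\qquad \uplim_{r\to0}\frac{\log C_{r,\theta}^{d,d}(X)}{-\log r}=\ubd X.
\]

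Finally I would feed this into the definition of the intermediate dimension profiles, which characterises $\lid^{d}X$ (resp.\ $\uid^{d}X$) as the \emph{unique} $s\in[0,d]$ solving $\lowlim_{r\to0}\tfrac{\log C_{r,\theta}^{s,d}(X)}{-\log r}=s$ (resp.\ the corresponding $\limsup$ equation). If $\lbd X=d$, the first display shows that $s=d$ solves the $\liminf$ equation, so $\lid^{d}X=d$ by uniqueness and then $\lid X=\lid^{d}X=d$ by Theorem \ref{equivdim}; together with $\lid X\le\uid X\le d$ this forces $\uid X=d$ as well, for all $\theta\in(0,1]$. If instead only $\ubd X=d$, the second display shows that $s=d$ solves the $\limsup$ equation, so $\uid^{d}X=d$ by uniqueness and $\uid X=\uid^{d}X=d$ by Theorem \ref{equivdim}. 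There is essentially no obstacle here beyond bookkeeping, as the substance is already packaged into Theorem \ref{equivdim} and \cite[Corollary~2.5]{falconerprofile}; the only points deserving a moment's care are the kernel identity $\phi_{r,\theta}^{d,d}=\phi_r^d$ and the fact that the profiles are \emph{defined} by a uniqueness property, so that exhibiting $s=d$ as a solution of the defining equation already determines the profile.
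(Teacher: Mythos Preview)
Your proof is correct and follows essentially the same route as the paper: both arguments observe that $\lowlim_{r\to 0}\frac{\log C_{r,\theta}^{d,d}(X)}{-\log r}=\lbd X=d$ and then invoke Theorem~\ref{equivdim} together with the uniqueness built into the definition of the profile. You have simply made explicit the steps the paper leaves implicit, namely the kernel identity $\phi_{r,\theta}^{d,d}=\phi_r^d$ and the appeal to \cite[Corollary~2.5]{falconerprofile}.
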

\begin{proof}
Observe that
$$
\lowlim\limits_{r \rightarrow 0}\frac{\log C_{r, \theta}^{d, d}(X)}{-\log r} =     \lbd X = d
$$
and so by   Theorem \ref{equivdim} it follows $  \uid X \geq \lid  X =  \lid^d X = \lbd X = d$.  The result concerning $\uid X$ alone follows similarly.
\end{proof}

The following counter-intuitive result follows by piecing together Corollaries \ref{cor1} and \ref{cor3}.  This gives a concrete application of the intermediate dimensions to a question concerning only the box and Hausdorff dimensions.

\begin{cor}\label{cor4}
Let $X \subset \mathbb{R}^d$ be a bounded set such that $\lid X$ is continuous at $\theta=0$.  Then
\[
\lbd P_V(X) = k
\]
for $\gamma_{d, k}$-almost all $V \in G(d, k)$ if and only if
\[
\hd X \geq k.
\]
A similar result holds for upper dimensions replacing $\lid X$ and $\lbd X$ with $\uid X$ and $\ubd X$, respectively.
\end{cor}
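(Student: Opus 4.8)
The plan is to prove the two implications separately, using the Marstrand--Mattila projection theorem for Hausdorff dimension together with Corollaries \ref{cor1} and \ref{cor3}; the continuity hypothesis on $\lid X$ will only be needed for the ``only if'' direction.

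For the ``if'' direction I would argue directly. Assuming $\hd X \geq k$, so that $\min\{k,\hd X\}=k$, the Marstrand--Mattila theorem gives $\hd P_V(X) = k$ for $\gamma_{d,k}$-almost every $V \in G(d,k)$. Since $P_V(X)$ is a bounded subset of the $k$-dimensional subspace $V$ one always has $\lbd P_V(X) \leq k$, and combining this with $\hd P_V(X) \leq \lbd P_V(X)$ pins $\lbd P_V(X) = k$ for almost every $V$. Note this direction uses nothing about continuity.

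For the ``only if'' direction I would assume $\lbd P_V(X) = k$ for $\gamma_{d,k}$-almost every $V$ and deduce $\hd X \geq k$. First, Marstrand--Mattila yields $\hd P_V(X) = \min\{k,\hd X\}$ for almost every $V$, so the hypothesis $\hd P_V(X) \geq \min\{k,\hd X\}$ of Corollary \ref{cor1} holds for almost every $V$; since $\lid X$ is continuous at $\theta = 0$, Corollary \ref{cor1} then shows that $\lid P_V(X)$ is continuous at $\theta=0$ for almost every $V$. Second, viewing $P_V(X)$ as a bounded subset of $V \cong \mathbb{R}^k$ and applying Corollary \ref{cor3} \emph{in ambient dimension $k$} to the almost-every $V$ with $\lbd P_V(X)=k$, one gets $\lid P_V(X) = k$ for every $\theta \in (0,1]$. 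The intersection of these two full-measure sets of ``good'' $V$ is again of full measure, hence non-empty; for any $V$ in it, continuity at $\theta = 0$ gives
\[
\min\{k,\hd X\} = \hd P_V(X) = \lim_{\theta\to 0^+}\lid P_V(X) = k,
\]
so $\hd X \geq k$.

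Finally, the upper-dimension statement follows by running the identical argument with $\lid$ replaced by $\uid$ and $\lbd$ by $\ubd$, invoking the upper-dimension parts of Corollaries \ref{cor1} and \ref{cor3} and using $\hd P_V(X) \leq \ubd P_V(X) \leq k$ in the ``if'' direction. I do not expect a genuine obstacle here: the whole content is already isolated in Corollaries \ref{cor1} and \ref{cor3} and in the Marstrand--Mattila theorem. The only points requiring mild care are bookkeeping of the almost-sure statements --- one must intersect the various full-measure sets of $V$ before stringing together the final chain of equalities --- and remembering to apply Corollary \ref{cor3} in ambient dimension $k$ rather than $d$.
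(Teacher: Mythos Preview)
Your proof is correct and follows essentially the same approach as the paper: both directions use exactly the ingredients you identify (Marstrand--Mattila, Corollary~\ref{cor1}, and Corollary~\ref{cor3} applied in ambient dimension $k$), with the only cosmetic difference being that the paper phrases the ``only if'' direction as a proof by contradiction (assume $\hd X < k$ and derive a discontinuity contradicting Corollary~\ref{cor1}) whereas you argue directly by computing the limit.
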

\begin{proof}
One direction is trivial, and holds without the continuity assumption, since, if $\hd X \geq k$, then 
\[
k \geq \lbd P_V(X)  \geq \hd P_V(X)  \geq k
\]
for $\gamma_{d, k}$-almost all $V \in G(d, k)$.  The other direction is where the interest lies.  Indeed, suppose  $\lbd P_V(X) = k$ for $\gamma_{d, k}$-almost all $V \in G(d, k)$ but 
$\hd X < k$.  Then Corollary \ref{cor3} implies that $\lid  P_V(X) = k$ for $\gamma_{d, k}$-almost all $V \in G(d, k)$ and all $\theta \in (0,1]$.  Applying the Marstrand--Mattila projection theorem for Hausdorff dimension, it follows that for  $\gamma_{d, k}$-almost all $V \in G(d, k)$    $\lid  P_V(X) $ is not continuous at $\theta = 0$, which contradicts Corollary \ref{cor1}.
\end{proof}

To motivate Corollary \ref{cor4} we give a couple of simple applications, both taken from \cite{bff21}.  First, if $X \subset \mathbb{R}^2$ is a self-affine  Bedford--McMullen carpet satisfying $\hd X < 1 \leq \bd X$, then, since $\dim_\theta X$ is continuous at 0, 
\[
\ubd P_V(X)<1 = \min \{ \bd X , 1\}
\]
for    $\gamma_{2, 1}$-almost all $V \in G(2, 1)$.  This surprising application seems difficult to derive directly.  The fact that the intermediate dimensions of Bedford--McMullen carpets are continuous at 0 was first established in \cite{intdims}; see also \cite{banajicarpets}.

Another, more accessible, example is provided by the sequence sets $F_p = \{ n^{-p} : n \geq 1\}$ for fixed $p >0$.  It is well-known that $\bd F_p = 1/(1+p)$ and therefore
\[
\bd (F_p \times F_p) = 2/(1+p)
\]
which is at least 1 for $p \leq 1$ and approaches 2 as $p$ approaches 0.   Continuity at $\theta = 0$ for $\uid F_p$ was established in \cite[Proposition 3.1]{intdims} and it is straightforward  to extend this to  $\uid F_p \times F_p$.  Therefore, since $\hd  F_p \times F_p=0<1$, we get
\[
\ubd P_V (F_p \times F_p) < 1
\]
for  $\gamma_{2, 1}$-almost all $V \in G(2, 1)$.  This is most striking when $p$ is very close to 0.  The reader may enjoy trying to derive the following:   for all $V \in G(2, 1)$ apart from the horizontal and vertical projections
\[
\ubd P_V( F_p \times F_p)  =  1- \left(\frac{p}{p+1}\right)^2.
\]
We would never have come across this entertaining formula   if Corollary \ref{cor4} had not led us to it.

\section{Using the Fourier spectrum to study the exceptional set for Hausdorff dimension} \label{specsec}

In this section we exhibit work from \cite{ana} which used the Fourier spectrum to prove new estimates for the Hausdorff dimension of the exceptional set in the Marstrand--Mattila projection theorem.  Specifically, we will prove the following result and describe a concrete application concerning continuity of information coming from the Fourier dimension, see Corollary \ref{prop:continuity}.  Here and throughout we write $\mu_V$ for the pushforward of $\mu$ under $P_V$, that is, $\mu_V$ is the projection of $\mu$ onto $V$.

\begin{thm}\label{thm:exceptionalFouriercoro}
	Let $\mu\in\M(\rd)$ and $1\leq k<d$ be an integer. Then for all $u\in[0,k]$,
	\begin{equation*}
		\hd \{ V\in\gdk : \hd \mu_{V}<u \}\leq\max\bigg\{0, k(d-k)+\inf_{\theta\in(0,1]}\frac{u-\fs  \mu}{\theta} \bigg\}.
	\end{equation*}
	Furthermore, if $X$ is a Borel set in $\rd$, then for all $u\in[0,k]$,
	\begin{equation*}
		\hd \{ V\in \gdk : \hd  P_{V}(X)<u \}\leq \max\bigg\{0, k(d-k)+\inf_{\theta\in(0,1]}\frac{u-\fs  X}{\theta} \bigg\}.
	\end{equation*}
\end{thm}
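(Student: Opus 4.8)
The plan is to run the potential-theoretic argument for exceptional sets used by Kaufman and by Peres--Schlag for \eqref{eq:peresschlagbound}, but with the $s$-energy systematically replaced by the $(s,\theta)$-energy $\J_{s,\theta}$, so that the Fourier spectrum takes over the role of the Hausdorff dimension. I treat the statement for measures first; the statement for sets follows at the end by a supremum over $\mu\in\M(X)$. Fix $\mu\in\M(\rd)$, $\theta\in(0,1]$ and $s<\fs\mu$, so that $\mathcal E:=\int_{\rd}|\widehat\mu(z)|^{2/\theta}|z|^{s/\theta-d}\,dz<\infty$, and (by a standard reduction) assume $\mu$ is compactly supported. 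The crux is the following: if $t>k(d-k)+(u-s)/\theta$ and $\nu\in\M(\gdk)$ satisfies $\nu(B(V,\delta))\lesssim\delta^{t}$ for all $V,\delta$, then $\int_{\gdk}I_u(\mu_V)\,d\nu(V)<\infty$. Granting this, the theorem for measures is immediate: if $\hd\{V:\hd\mu_V<u\}$ exceeded such a $t$, Frostman's lemma would furnish exactly such a $\nu$ carried by the exceptional set, but $\hd\mu_V<u$ forces $I_u(\mu_V)=\infty$, a contradiction; hence $\hd\{V:\hd\mu_V<u\}\le k(d-k)+(u-s)/\theta$, and one supremises over $s<\fs\mu$, infimises over $\theta\in(0,1]$, and uses nonnegativity of Hausdorff dimension. (The cases $u>s$, or $t>k(d-k)=\dim\gdk$, are vacuous or covered by the trivial bound $k(d-k)$; $u=0$ is trivial as the exceptional set is then empty, and $u=k$ follows from $u<k$ by a limiting argument since the right-hand side is concave, hence continuous, in $u$.)

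For the crux I would pass to the frequency side. Identifying $V\cong\rk$ isometrically, $\widehat{\mu_V}(\xi)=\widehat\mu(\tilde\xi)$ where $\tilde\xi\in V$ corresponds to $\xi$, so Plancherel gives, for $0<u<k$,
\[
I_u(\mu_V)\ \asymp\ \int_{V}|\widehat\mu(\eta)|^{2}|\eta|^{u-k}\,d\H^{k}(\eta),
\]
and hence, integrating in $V$ and passing to polar coordinates on each $V$,
\[
\int_{\gdk}I_u(\mu_V)\,d\nu(V)\ \asymp\ \int_{\rd}|\widehat\mu(z)|^{2}|z|^{u-1}\,dm_\nu(z),
\]
where $m_\nu$ is the measure on $\rd$ equal in polar coordinates $(\rho,\omega)$ to $d\rho\,d\kappa_\nu(\omega)$, and $\kappa_\nu(A)=\int_{\gdk}\H^{k-1}\big((\sdd\cap V)\cap A\big)\,d\nu(V)$ is a measure on $\sdd$. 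The input from the geometry of $\gdk$ is the elementary fact that $\{V:\operatorname{dist}(\omega,V)\le\delta\}$ is a $\delta$-neighbourhood of a $(k-1)(d-k)$-dimensional submanifold, so has $\nu$-measure $\lesssim\delta^{\,t-(k-1)(d-k)}$; this yields the Frostman bound $\kappa_\nu(B(\omega,\delta))\lesssim\delta^{\,(d-1)-\gamma}$ with $\gamma:=k(d-k)-t\ge0$, as well as $m_\nu(A_j)\asymp 2^{j}$ on the annulus $A_j=\{2^{j}\le|z|<2^{j+1}\}$.

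The estimate is then finished dyadically. On each $A_j$, Hölder with exponents $(1/\theta,1/(1-\theta))$ against $m_\nu$ gives
\[
\int_{A_j}|\widehat\mu|^{2}\,dm_\nu\ \le\ \Big(\int_{A_j}|\widehat\mu|^{2/\theta}\,dm_\nu\Big)^{\theta}\,m_\nu(A_j)^{1-\theta},
\]
and the decisive step is to trade $m_\nu$ for Lebesgue measure in the first factor. Compact support of $\mu$ is what makes this possible: $\widehat\mu$ varies slowly at unit scale, so on each unit cube $Q\subseteq A_j$ the function $|\widehat\mu|^{2/\theta}$ is comparable to its Lebesgue average, while $m_\nu(Q)\lesssim 2^{-j((d-1)-\gamma)}$ by the Frostman bound for $\kappa_\nu$ at angular scale $2^{-j}$; summing over $Q$ yields $\int_{A_j}|\widehat\mu|^{2/\theta}\,dm_\nu\lesssim 2^{-j((d-1)-\gamma)}\int_{A_j}|\widehat\mu|^{2/\theta}\,dz$, and $\int_{A_j}|\widehat\mu|^{2/\theta}\,dz\lesssim 2^{j(d-s/\theta)}\mathcal E$ since $\mathcal E<\infty$. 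Feeding these back and multiplying by $2^{j(u-1)}$ gives, after a one-line tally of exponents,
\[
2^{j(u-1)}\int_{A_j}|\widehat\mu|^{2}\,dm_\nu\ \lesssim\ \mathcal E^{\theta}\,2^{\,j(u+\theta\gamma-s)},
\]
which is summable over $j\ge0$ precisely when $u+\theta\gamma<s$, i.e. $t>k(d-k)+(u-s)/\theta$; the part $|z|<1$ is harmless for $u>0$. This proves the crux.

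For the set statement one observes that $\{V:\hd P_V(X)<u\}\subseteq\{V:\hd\mu_V<u\}$ for every $\mu\in\M(X)$; one applies the measure statement to a $\mu$ (depending on $\theta$) with $\fs\mu\ge\fs X-\eps$, lets $\eps\to0$, and infimises over $\theta$ using $\inf_\theta\max\{0,g(\theta)\}=\max\{0,\inf_\theta g(\theta)\}$. I expect the genuine obstacle to be the transfer inequality $\int_{A_j}|\widehat\mu|^{2/\theta}\,dm_\nu\lesssim 2^{-j((d-1)-\gamma)}\int_{A_j}|\widehat\mu|^{2/\theta}\,dz$: it requires making quantitative the slow variation of $\widehat\mu$ at the scale set by $\operatorname{diam}\operatorname{supp}\mu$ and pairing it precisely with the Frostman dimension of $\kappa_\nu$ on $\sdd$; obtaining the Grassmannian covering estimate for $\kappa_\nu$ with the sharp exponent $(d-1)-\gamma$ is the other delicate point. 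The Hölder step, the reduction to compact support, and the final optimisation over $(s,\theta)$ should all be routine once these two ingredients are secured. As a sanity check, taking $\theta=1$, where $\fs X=\hd X$, recovers \eqref{eq:peresschlagbound}.
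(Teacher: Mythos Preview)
Your argument is sound and the exponent bookkeeping checks out, but it is organised differently from the paper. The paper does not attack $\hd\mu_V$ directly: it first proves the stronger statement (Theorem~\ref{thm:exceptionalFourier}) that
\[
\hd\{V:\fs\mu_V<u\}\le \max\Big\{0,\ k(d-k)+\tfrac{u-\fs\mu}{\theta}\Big\},
\]
and then deduces Theorem~\ref{thm:exceptionalFouriercoro} from the inequality $\hd\mu_V\ge\min\{k,\fs\mu_V\}$. Concretely, with $\nu$ a Frostman measure on the exceptional set, the paper shows $\int\J_{u,\theta}(\mu_V)^{1/\theta}\,d\nu(V)<\infty$; the integrand is already $\int|\widehat\mu(y_V)|^{2/\theta}|y|^{u/\theta-k}\,dy$, so the exponent $2/\theta$ enters from the outset. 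H\"older is used only to obtain the pointwise convolution bound $|\widehat\mu|^{2/\theta}\lesssim|\widehat\varphi|*|\widehat\mu|^{2/\theta}$, after which Fubini reduces everything to a single kernel estimate, \eqref{eq:thm_bound}. By contrast, you work with the classical energy $I_u(\mu_V)=\int|\widehat\mu|^2|\cdot|^{u-k}$ and bring in $\theta$ through H\"older on dyadic annuli with exponents $(1/\theta,1/(1-\theta))$, followed by a transfer from $m_\nu$ to Lebesgue measure.

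Both routes hinge on the same two ingredients you flag: the Grassmannian Frostman bound (the paper's \eqref{frostmancondition}, your $\kappa_\nu(B(\omega,\delta))\lesssim\delta^{(d-1)-\gamma}$) and a quantitative ``slow variation'' of $\widehat\mu$ at unit scale. For the latter, note that $|\widehat\mu|^{2/\theta}$ need not be comparable to its average on a unit cube (it can vanish), so the honest version of your transfer step is exactly the paper's convolution inequality $|\widehat\mu|^{2/\theta}\lesssim|\widehat\varphi|*|\widehat\mu|^{2/\theta}$ combined with $m_\nu(B(w,1))\lesssim 2^{-j((d-1)-\gamma)}$; once phrased this way your Step~5 goes through. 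The trade-off is that the paper's organisation yields the stronger intermediate result about $\fs\mu_V$ for free and packages the geometry into one clean kernel lemma, whereas your route is more direct for the stated theorem and makes the role of $\theta$ as a H\"older exponent quite transparent.
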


  Since for all $\theta\in[0,1]$, $\min\{ d, \fs \mu \}\leq \hd \mu$ and $\fs X\leq \hd X$, the previous theorem is an  immediate consequence of Theorem~\ref{thm:exceptionalFourier}, which we state and prove later.

Before bringing the Fourier spectrum into play, we first consider what information one can get concerning the exceptional set by considering the Fourier dimension alone.  Indeed, it is easy to see that there are \emph{no} exceptional projections for Salem sets, that is,  sets that have the same Fourier and Hausdorff dimension.  In fact, one can say   more.  If $X\subseteq\rd$ is a Borel set with $\fd X = t$, then for all $u\leq t$,
\begin{equation*}
		\big\{ V\in\gdk : \hd P_{V}(X) < \min\{k,u\} \big\} =\varnothing,
\end{equation*}
see \cite{ana} for more details.  A natural question arising from this is whether Fourier dimension can be used to say anything   for $u>t$; for example, is there a continuous upper bound for 
\begin{equation*}
	\hd 	\big\{ V\in\gdk : \hd P_{V}(X) < \min\{k,u\} \big\}  
\end{equation*}
as a function of $u$ (depending on the Fourier dimension) which is 0 at $u=t$? Perhaps surprisingly, it turns out that  nothing can be said  using the Fourier dimension alone.  That is, knowledge of the Fourier dimension of $X$ does not give any information about the dimension of the set of $V$ for which $\hd P_{V}(X) < u$ as soon as $u> \fd X$.  The following was proved in \cite{ana}.

\begin{prop}\label{propo:counter_example}
	For any $s\in(0,1]$ and $t\in(s/2,s)$ there exists a compact set $X\subseteq\R^2$ with $\hd X = s$ and $\fd X = t$ such that for $u<t$,
\begin{equation*}
    \{ V\in\gto : \hd P_{V}(X)\leq u \} =\varnothing,
\end{equation*}
and for $u\geq t$,
\begin{equation*}
  \hd\{ V\in\gto : \hd P_{V}(X)\leq u \} \geq 2t-s.
\end{equation*}
That is, the dimension of the exceptional set has a jump discontinuity at $\fd X$ from $0$ to the largest value it could possibly take according to \eqref{eq:oberlin}.
\end{prop}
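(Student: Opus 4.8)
\emph{Proof proposal.}
The plan is to realise $X$ as a union $X = A \cup B$ of two compact sets playing complementary roles: $A$ supplies a large set of bad projections, while $B$ is a Salem set which pins the Fourier dimension at $t$. Write $m = 2t - s$; the hypothesis $s/2 < t < s \le 1$ gives $0 < m < t < s \le 1$, so all dimensions below are admissible. For $A$ I would invoke the (classical) sharpness of Oberlin's bound, recalled just after \eqref{eq:oberlin}: there is a compact $A \subseteq \R^2$ with $\hd A = s$ and a Borel set $E \subseteq \gto$ with $\hd E \ge m$ such that $\hd P_V(A) < t$ for every $V \in E$. For $B$ I would take a compact Salem set in $\R^2$ of dimension $t$, so $\hd B = \fd B = t$; such sets exist for every $t \in (0,2)$ by the constructions referenced in Section \ref{intro}. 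Then $X = A \cup B$ is compact, $\hd X = \max\{\hd A, \hd B\} = s$ since $t < s$, and $\fd X \ge \fd B = t$ by monotonicity of the Fourier dimension under inclusions, via $\M(B) \subseteq \M(X)$.

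The crux of the verification is the reverse bound $\fd X \le t$, which I would obtain from an elementary observation about projections. Fix any $V_0 \in E$. For $\xi \in V_0$ and $x \in \R^2$ one has $\xi \cdot P_{V_0}(x) = \xi \cdot x$, so $\widehat{\nu_{V_0}}(\xi) = \widehat\nu(\xi)$ for every $\nu \in \M(X)$, whence $\fd \nu \le \fd \nu_{V_0}$. Now $\hd \nu_{V_0} \le \hd P_{V_0}(X) = \max\{\hd P_{V_0}(A), \hd P_{V_0}(B)\} \le \max\{t, \hd B\} = t < 1$, using $V_0 \in E$. Since $\hd \lambda \ge \min\{1, \sd \lambda\}$ for measures $\lambda$ on $V_0 \cong \R$, the strict inequality $\hd \nu_{V_0} < 1$ forces $\sd \nu_{V_0} < 1$ and hence $\hd \nu_{V_0} \ge \sd \nu_{V_0}$; together with $\fd \le \sd$ this gives $\fd \nu \le \fd \nu_{V_0} \le \sd \nu_{V_0} \le \hd \nu_{V_0} \le t$. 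Taking the supremum over $\nu \in \M(X)$ yields $\fd X \le t$, so $\fd X = t$.

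It then remains to read off the two claims about the exceptional set. For $u < t$: since $\fd X = t$, the fact recalled above (no exceptional projections below the Fourier dimension) applied at any level $u' \in (u,t]$ gives $\hd P_V(X) \ge \min\{1,u'\} = u' > u$ for all $V \in \gto$, so $\{V \in \gto : \hd P_V(X) \le u\} = \varnothing$. For $u \ge t$: every $V \in E$ satisfies $\hd P_V(X) = \max\{\hd P_V(A), \hd P_V(B)\} \le \max\{t, t\} = t \le u$, so $E \subseteq \{V \in \gto : \hd P_V(X) \le u\}$, whence $\hd\{V \in \gto : \hd P_V(X) \le u\} \ge \hd E \ge m = 2t - s$.

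The only genuinely non-trivial ingredient is the existence of the set $A$ witnessing the sharpness of Oberlin's bound at the level $u = t$ with ambient dimension $\hd A = s$; everything else is a short formal argument. If one wishes to avoid quoting this as a black box, the real work lies in constructing $A$ directly: one would fix a compact $E \subseteq \gto$ with $\hd E = 2t - s$ and build a Moran/Cantor-type set in $\R^2$ whose $n$-th generation, projected by any $P_V$ with $V \in E$, is covered by only about (scale)$^{-t}$ intervals, while still $\hd A = s$. This forces a careful multi-scale matching between the construction of $A$ and a Cantor-type construction of $E$, so that the offspring in each cell at scale $n$ are aligned with the direction appropriate to the corresponding sub-arc of $E$; that bookkeeping is precisely the classical exceptional-set-sharpness construction and is where all the effort sits.
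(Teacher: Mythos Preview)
The survey does not prove this proposition; it simply quotes the statement and cites \cite{ana}, so there is no in-text argument to compare against. Your construction $X=A\cup B$, with $A$ a compact sharpness example for the Oberlin bound (so $\hd A=s$ and there is $E\subseteq G(2,1)$ with $\hd E\geq 2t-s$ and $\hd P_V(A)\leq t$ for $V\in E$) and $B$ a compact Salem set of dimension $t$, is correct and is the natural approach. The only non-routine step is the upper bound $\fd X\leq t$, and your route via a single bad direction is sound: for $V_0\in E$ and any $\nu\in\M(X)$ one has $\widehat{\nu_{V_0}}(\xi)=\widehat\nu(\xi)$ for $\xi\in V_0$, hence $\fd\nu\leq\fd\nu_{V_0}\leq\sd\nu_{V_0}$; since $\hd\nu_{V_0}\leq\hd P_{V_0}(X)\leq t<1$, the general inequality $\hd\nu_{V_0}\geq\min\{1,\sd\nu_{V_0}\}$ forces $\sd\nu_{V_0}\leq t$. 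Two minor remarks: you only need $\hd P_V(A)\leq t$ (not strict) on $E$, which is exactly what the classical Kaufman--Mattila constructions deliver; and in the final paragraph your sketch of how to build $A$ directly is accurate but, as you say, is precisely the known exceptional-set sharpness construction and need not be reproduced.
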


We will see that one can say more by using the Fourier spectrum.  We now state and prove the main result from \cite{ana}.  Observe that when  $\theta = 1$ we recover the bound from \cite[Proposition~6.1]{PS00}.

\begin{thm}\label{thm:exceptionalFourier}
	Let $\mu\in\M(\rd)$, $\theta\in(0,1]$ and $1\leq k<d$ be an integer. Then for all $0\leq u\leq \min\{ k, \fs \mu \}$,
 \begin{equation}\label{eq:thm1}
 	 \hd \{ V\in G(d,k) : \fs \mu_{V}<u \}\leq  \max\bigg\{ 0, k(d-k)+\frac{u-\fs \mu}{\theta}\bigg\}.
 \end{equation}
 Furthermore, if $X$ is a Borel set in $\rd$ and $\theta\in(0,1]$, then for all $0\leq u\leq \min\{ k, \fs X \}$,
 \begin{equation*}
 	\hd \{ V\in G(d,k) : \fs  P_{V}(X)<u \}\leq  \max\bigg\{ 0, k(d-k)+\frac{u-\fs X}{\theta}\bigg\}.
 \end{equation*}
\end{thm}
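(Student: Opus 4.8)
The plan is to prove the measure statement \eqref{eq:thm1} directly via a potential-theoretic / energy argument, using the $(s,\theta)$-energies $\J_{s,\theta}$ and the integralgeometric behaviour of the Fourier transform under projections, and then deduce the set statement by the standard measure-to-set transference ($\fs X = \sup\{\min\{\fs\mu,d\}:\mu\in\M(X)\}$, together with the fact that $P_V$ pushes measures on $X$ to measures on $P_V(X)$). First I would record the key Fourier-analytic identity: for $V\in G(d,k)$, identifying $V\cong\rk$, one has $\widehat{\mu_V}(\xi)=\widehat{\mu}(\pi_V^*\xi)$ for $\xi\in\rk$, where $\pi_V^*$ is the inclusion of $V$ into $\rd$. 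This lets one write the $(s,\theta)$-energy of $\mu_V$ as an integral over $V$ of $|\widehat\mu|^{2/\theta}$ against a power of $|\xi|$, and the task is to average this over $V\in G(d,k)$ with respect to $\gamdk$.

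The main computation is to integrate $\J_{s,\theta}(\mu_V)$ (or rather the inner integral raised to the power $1/\theta$, to keep things additive before applying the outer power $\theta$) over the Grassmannian. Using the coarea/Fubini-type formula that relates integration over $\rd$ to integration over $\gdk$ and over fibres $V$, the average $\int_{\gdk}\int_V |\widehat\mu(\pi_V^*\xi)|^{2/\theta}|\xi|^{s/\theta-k}\,d\H^k(\xi)\,d\gamdk(V)$ should reduce, up to constants depending only on $d,k$, to $\int_{\rd}|\widehat\mu(z)|^{2/\theta}|z|^{s/\theta-d}\,dz$, which is finite precisely when $s<\fs[\theta]\mu$ — i.e.\ it is $\le \J_{s,\theta}(\mu)^{1/\theta}<\infty$ for such $s$. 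More generally, following Peres–Schlag, one inserts a weight $|V'|^{-\sigma}$ (where $V'$ ranges over an $\eps$-net of $\gdk$, or one works with a Frostman measure of dimension $k(d-k)-\tau$ on the exceptional set) so that the Grassmannian integral becomes a singular integral whose convergence forces the singularity exponent to satisfy a linear constraint; this is exactly the mechanism that produces the $k(d-k)$ term and the $\theta$ in the denominator. I would set $s$ slightly below $\fs[\theta]\mu$ and track which exponents keep the weighted integral finite.

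Concretely I would argue by contradiction: suppose $\hd\{V:\fs[\theta]\mu_V<u\}>k(d-k)+(u-\fs[\theta]\mu)/\theta$ and this RHS is positive. By Frostman's lemma pick a compactly supported Borel probability measure $\nu$ on the exceptional set $E$ with $I_\tau(\nu)<\infty$ for some $\tau$ strictly between $k(d-k)+(u-\fs[\theta]\mu)/\theta$ and $\hd E$. For each $V\in E$ we have $\J_{u',\theta}(\mu_V)=\infty$ for any $u'$ with $u<u'<\min\{k,\fs[\theta]\mu\}$ (using $\fs[\theta]\mu_V<u$), i.e.\ $\int_V|\widehat\mu(\pi_V^*\xi)|^{2/\theta}|\xi|^{u'/\theta-k}\,d\H^k(\xi)=\infty$. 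Integrate this against $d\nu(V)$; by Fubini (justified since the integrand is nonnegative) and the Grassmannian integral estimate — the Peres–Schlag bound $\int_{\gdk}|P_V^*\xi|^{-a}\,d\nu(V)\lesssim I_\tau(\nu)\,|\xi|^{-a}$ valid for $a<d-k(d-k)+\tau$ type thresholds (the exact numerology is what must be pinned down) — one bounds $\int\J\,d\nu$ above by a constant multiple of $\int_{\rd}|\widehat\mu(z)|^{2/\theta}|z|^{u'/\theta - k - (\text{correction})}\,dz$, and the correction makes the exponent effectively $s/\theta - d$ for some $s<\fs[\theta]\mu$ exactly when $u'$ and $\tau$ satisfy the assumed inequality; that integral is finite, contradicting that the left side is $+\infty$ on a positive-$\nu$-measure set. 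Letting $u'\downarrow u$ and $\tau\downarrow k(d-k)+(u-\fs[\theta]\mu)/\theta$ gives the bound. The set version follows: if $\fs P_V(X)<u$ then for every $\mu\in\M(X)$, $\fs\mu_V\le\fs P_V(X)<u$, so the exceptional set of $V$'s for $X$ is contained in that for each $\mu$, and applying the measure result to a near-optimal $\mu$ (with $\min\{\fs\mu,d\}$ close to $\fs X$) finishes it.

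The hard part will be getting the Grassmannian-averaging estimate with the correct exponents — i.e.\ the precise analogue, for the $2/\theta$-power integrand, of the Peres–Schlag Fourier-integral bound over $G(d,k)$ weighted by a Frostman measure — and in particular verifying that the $1/\theta$ scaling in $|z|^{s/\theta-d}$ interacts with the $k(d-k)$-dimensional fibration so as to yield exactly the $(u-\fs[\theta]\mu)/\theta$ slope rather than something else. A secondary technical point is handling the outer exponent $\theta$ in the definition of $\J_{s,\theta}$ cleanly: one should work throughout with the inner integral (which is genuinely additive/Fubini-friendly) and only take the $\theta$-th power at the very end, noting that finiteness of the inner integral is equivalent to finiteness of $\J_{s,\theta}$. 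Checking the endpoint $\theta=1$ recovers the Peres–Schlag bound \eqref{eq:peresschlagbound} is a good sanity check on the numerology.
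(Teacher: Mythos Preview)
Your overall architecture is correct and matches the paper's: argue by contradiction, take a Frostman measure $\nu$ on the exceptional set with exponent $\tau$ strictly between $k(d-k)+(u-\fs\mu)/\theta$ and the (assumed too large) Hausdorff dimension, work with the inner integral $\J_{u,\theta}(\mu_V)^{1/\theta}$ so that Fubini is available, show $\int \J_{u,\theta}(\mu_V)^{1/\theta}\,d\nu(V)<\infty$, and derive a contradiction. The set case is reduced to the measure case exactly as you describe.

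The gap is in the mechanism you propose for the averaging step. The ``Peres--Schlag bound'' you quote, $\int_{\gdk}|P_V^*\xi|^{-a}\,d\nu(V)\lesssim I_\tau(\nu)\,|\xi|^{-a}$, is the \emph{spatial}-side estimate that drives Kaufman's argument when $\theta=1$: there one has $I_s(\mu_V)=\iint |P_V(x-y)|^{-s}\,d\mu(x)\,d\mu(y)$ and averages the kernel in $V$. For $\theta<1$ there is no such spatial representation of $\J_{s,\theta}$, so one must stay on the Fourier side, where the inner variable $\xi$ lives in $V$ and depends on the outer variable $V$; a direct Fubini does not produce an $\rd$-integral. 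The paper bridges this with a mollification: write $\mu=\varphi\mu$ for $\varphi\in\S(\rd)$ with $\varphi\equiv 1$ on $\spt\mu$, and use H\"older (with exponents $2/\theta$ and $2/(2-\theta)$) to get
\[
|\widehat{\mu}(z)|^{2/\theta}\ \lesssim\ \bigl(|\widehat{\varphi}|*|\widehat{\mu}|^{2/\theta}\bigr)(z)\ =\ \int_{\rd}|\widehat{\varphi}(z-w)|\,|\widehat{\mu}(w)|^{2/\theta}\,dw.
\]
Now the $w$-integral is over $\rd$ independently of $V$, so Fubini gives
\[
\int_{\gdk}\J_{u,\theta}(\mu_V)^{1/\theta}\,d\nu(V)\ \lesssim\ \int_{\rd}|\widehat{\mu}(w)|^{2/\theta}\bigg[\int_{\gdk}\int_{\rk}(1+|y_V-w|)^{-N}|y|^{u/\theta-k}\,dy\,d\nu(V)\bigg]dw,
\]
and the remaining task is the pointwise bound on the bracket by $|w|^{u/\theta+k(d-k)-d-\tau}$, which uses only the Frostman condition on $\nu$ via $\nu\{V:d(w,V)\leq r\}\lesssim (r/|w|)^{\tau-(k-1)(d-k)}$ rather than the energy $I_\tau(\nu)$. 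This mollification-then-Fubini step is the non-obvious ingredient you are missing; once it is in place, your numerology check (and the $\theta=1$ sanity check against Peres--Schlag) goes through exactly as you anticipate.
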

\begin{proof}
In this proof we write $A\lesssim B$ if there exists a constant $C>0$ such that $A\leq CB$ and $A\approx B$ if $A\lesssim B$ and $B\lesssim A$. If we wish to emphasise that the constant $C$ depends on some parameter $\lambda$ we shall express it as $A\lesssim_{\lambda}B$ or $A\approx_{\lambda}B$.

    The claim for sets follows from the statement for measures. To see this fix $\theta\in(0,1]$, let $\varepsilon>0$ and $\mu\in\M(X)$ be such that $\fs\mu \geq  \fs X - \varepsilon$. Since $\fs P_{V}(X) \geq \min\{ k, \fs \mu_{V}\}$ and $u \leq k$, it follows that $\fs P_{V}(X)<u \Rightarrow  \fs\mu_{V}<u$.  Then,
  \begin{align*}\label{eq:suff}
    \hd\{ V\in\gdk : \fs P_{V}(X)<u \}&\leq\hd\{ V\in\gdk : \fs\mu_{V}<u \}\\
    &\leq k(d-k)+\frac{u-\fs\mu}{\theta}\\
    &= k(d-k)+\frac{u-\fs X+\varepsilon}{\theta},
  \end{align*}
 and  letting $\varepsilon\to0$ gives the result. We now proceed to prove the claim for measures, which follows \cite{ana}.
   
  Let $G_{u,\theta} = \{ V\in\gdk : \fs  \mu_{V}<u \}$ and suppose \eqref{eq:thm1} is false for some $u>0$. Choose $\tau>0$ such that $k(d-k)+\frac{u-s}{\theta}<\tau<\hd G_{u,\theta}$, for some $s<\fs \mu$.  First, observe that $G_{u,\theta}$ is a Borel set, see \cite{ana} for details.  Therefore, by Frostman's lemma there exists a measure $\nu\in\M(G_{u,\theta})$ such that $\nu\big( B(V,r) \big)\leq r^\tau$ for all $V\in\gdk$ and $r>0$. We will show that
	\begin{equation}\label{eq:thm1_energy}
		\int_{\gdk}\J_{u,\theta}(\mu_{V})^{1/\theta}\,d\nu(V)< \infty,
	\end{equation}
	and this will imply that $\J_{u,\theta}(\mu_{V})^{1/\theta}<\infty$ for $\nu$ almost every $V\in \gdk$. Then $\nu(G_{u,\theta})=0$ which contradicts the fact that $\nu\in\M(G_{u,\theta})$.
	
Given $y \in \mathbb{R}^k$ and $V\in\gdk$, define  $y_{V} \in \rd$ by
\begin{equation}\label{eq:yv}
\{y_{V} \} = V\cap P_{V}^{-1}(y).
\end{equation}
We write $\S(\rd)$ for the family of  functions in the Schwartz class on $\rd$; see  \cite[Chapter 3]{Mat15}.	Let $\varphi\in\S(\rd)$ be such that $\varphi(x) = 1$ in $\spt\mu$, where $\spt\mu$ denotes the (compact) support of $\mu$.  Then $\mu = \varphi \mu$ and $\widehat{\mu} = \widehat{\varphi \mu} = \widehat{\varphi}*\widehat{\mu}$. Moreover,  $\widehat{\varphi}\in\S(\rd)$, and for every $N\in\N$, $\big|\widehat{\varphi}(z)\big|\lesssim_{\varphi,N}\big(1+|z|\big)^{-N}$. Using H\"older's inequality with conjugate exponents $2/\theta$ and $2/(2-\theta)$, we obtain the following estimate for $z \in \rd$:
	\begin{align*}
		\big|\widehat{\mu}(z)\big|^{\frac{2}{\theta}} &\leq \bigg[ \int_{\rd}\big|\widehat{\mu}(z-y)\widehat{\varphi}(y)\big|\,dy \bigg]^{\frac{2}{\theta}}\\
		&= \bigg[ \int_{\rd} \big|\widehat{\mu}(z-y)\big|\big|\widehat{\varphi}(y)\big|^{\frac{\theta}{2}} \big|\widehat{\varphi}(y)\big|^{\frac{2-\theta}{2}}\,dy\bigg]^{\frac{2}{\theta}}\\
		&\leq \Bigg[ \bigg( \int_{\rd}\big| \widehat{\mu}(z-y) \big|^{\frac{2}{\theta}} \big| \widehat{\varphi}(y) \big|\,dy\bigg)^{\frac{\theta}{2}}\bigg( \int_{\rd} \big| \widehat{\varphi}(y) \big|\,dy \bigg)^{\frac{2-\theta}{2}} \Bigg]^{\frac{2}{\theta}}\\
		&= \int_{\rd} \big| \widehat{\mu}(z-y) \big|^{\frac{2}{\theta}}\big| \widehat{\varphi}(y) \big|\,dy \,\bigg(\int_{\rd} \big| \widehat{\varphi}(y) \big|\,dy \bigg)^{\frac{2-\theta}{\theta}}\\
		&\lesssim \int_{\rd}\big| \widehat{\mu}(z-y) \big|^{\frac{2}{\theta}}\big| \widehat{\varphi}(y) \big|\,dy \\
&= \big( \big|\widehat{\varphi}\big| * \big|\widehat{\mu}\big|^{\frac{2}{\theta}} \big)(z).
	\end{align*}	
	 Recalling \eqref{eq:yv}, $\widehat{\mu_{V}}(y) = \widehat{\mu}(y_{V})$, and therefore
	\begin{align*}
		\int_{\gdk}\J_{u,\theta}(\mu_{V})^{1/\theta}\,d\nu(V) &= \int_{\gdk}\int_{\rk} \big| \widehat{\mu_{V}}(y) \big|^{\frac{2}{\theta}}|y|^{\frac{u}{\theta}-k} \, dy \,d\nu(V)\\
		&= \int_{\gdk}\int_{\rk}\big| \widehat{\mu}(y_{V}) \big|^{\frac{2}{\theta}}|y|^{\frac{u}{\theta}-k}\, dy \,d \nu(V)\\
		&\lesssim \int_{\gdk}\int_{\rk}\big( \big|\widehat{\varphi}\big|*\big|\widehat{\mu}\big|^{\frac{2}{\theta}} \big)(y_{V})|y|^{\frac{u}{\theta}-k}\, dy \,d\nu(V)\\
		&= \int_{\gdk}\int_{\rk}\bigg( \int_{\rd}\big| \widehat{\varphi}(y_{V}-z) \big| \big| \widehat{\mu}(z) \big|^{\frac{2}{\theta}}\,dz\bigg)|y|^{\frac{u}{\theta}-k}\, dy \,d\nu(V)\\
		&= \int_{\rd}\big| \widehat{\mu}(z) \big|^{\frac{2}{\theta}}\Bigg[\int_{\gdk}\int_{\rk} \big| \widehat{\varphi}(y_{V}-z) \big||y|^{\frac{u}{\theta}-k} \, dy \,d\nu(V)\Bigg]\,dz\\
		&\lesssim \int_{\rd}\big| \widehat{\mu}(z) \big|^{\frac{2}{\theta}}\Bigg[ \int_{\gdk}\int_{\rk}\big(1+|y_{V}-z|\big)^{-N}|y|^{\frac{u}{\theta}-k}\, dy \,d\nu(V) \Bigg]\,dz.
	\end{align*}
	
	To finish the proof of the theorem we need to show that the above is finite. It is enough to see that for $N$ sufficiently large,
	\begin{equation}\label{eq:thm_bound}
		\int_{\gdk}\int_{\rk}\big(1+|y_{V}-z|\big)^{-N}|y|^{\frac{u}{\theta}-k}\, dy \,d\nu(V)\lesssim |z|^{\frac{u}{\theta}+k(d-k)-d-\tau},
	\end{equation}
for all $z \in \rd$ with $|z| \geq 2$ because then, since $k(d-k)+\frac{u-s}{\theta}<\tau$,
	\begin{align*}
		\int_{\gdk}\J_{u,\theta}(\mu_{V})^{1/\theta}\,d\nu(V) 
		&\lesssim \int_{\rd} \big| \widehat{\mu}(z) \big|^{\frac{2}{\theta}}|z|^{\frac{u}{\theta}+k(d-k)-d-\tau}\,dz \\
		&\lesssim \int_{\rd}\big| \widehat{\mu}(z) \big|^{\frac{2}{\theta}}|z|^{\frac{s}{\theta}-d}\,dz\\
		&= \J_{s,\theta}(\mu)^{1/\theta}<\infty,
	\end{align*}
	since $s<\fs \mu$.  This establishes \eqref{eq:thm1_energy} and completes the proof.

The proof of \eqref{eq:thm_bound} is technical and we refer the reader to \cite{ana} for the details.  Briefly, one splits the integral into the dyadic annuli centred at $z$ as
	\begin{align*}
		\int_{\gdk}\int_{\rk}\big(1+|y_{V} - z|\big)^{-N}&|y|^{\frac{u}{\theta}-k}\, dy \,d\nu(V) \\
    &\quad =\iint_{\{(V,y) : |y_{V}-z|\leq 1/2 \}} + \sum_{\{ j \geq 0 : |z|> 2^{j+1} \}}\iint_{\{ (V,y) : 2^{j-1}<|y_{V}-z|\leq 2^{j} \}} \\
    &\quad \quad + \sum_{\{ j \geq 0 : |z|\leq 2^{j+1} \}}\iint_{\{ (V,y) : 2^{j-1}<|y_{V}-z|\leq 2^j \}}
	\end{align*}
where the sums are over integer $j$.  Then each term  is handled separately, using that    from the definition of $\nu$ we have for all $r>0$ and $z\in \rd$,
	\begin{equation} \label{frostmancondition}
		\nu\big( \{ V\in\gdk : d(z,V)\leq r \} \big)\lesssim \bigg( \frac{r}{|z|}\bigg)^{\tau-(k-1)(d-k)}
	\end{equation}
where    $d(z,Y) = \inf\{ |z-y| :  y \in Y\}$.  	
\end{proof}

\subsection{Continuity of the dimension of the exceptional set}

Proposition~\ref{propo:counter_example} showed us that, for sets $X$, the dimension of the set of  exceptional directions can be discontinuous at $u = \fd X$.  Earlier we hinted at the following question:  under which conditions on $\mu$ can  continuity of the dimension of the set of  exceptional directions   be recovered at  $u = \fd X$.   We show in the following corollary  that such   conditions can be given in terms of the derivative of the Fourier spectrum at 0.  There is an analogous result for measures, which we leave to the reader to formulate.  

\begin{cor}\label{prop:continuity}
Let $X$ be a Borel set in $\rd$ and let
\[
D = \liminf_{\theta \to 0} \frac{\fs X - \fd X}{\theta}
\]
be the lower right semi-derivative of $\fs X$ at $\theta = 0$.  If $D \geq k(d-k)$, then the function  $u\mapsto\hd \{ V\in\gdk : \hd P_{V}(X)<u \}$ is continuous at $u = \fd X$.
\end{cor}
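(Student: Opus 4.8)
The plan is to combine two facts. The value $h(\fd X)$ and the left-hand limit of the function at $\fd X$ come essentially for free from the known emptiness of the exceptional set below the Fourier dimension (recalled just before Proposition~\ref{propo:counter_example}); only the behaviour just to the right of $\fd X$ requires work, and there I would invoke Theorem~\ref{thm:exceptionalFouriercoro} together with the hypothesis $D\ge k(d-k)$ to force the bound supplied by that theorem down to $0$ as $u\downarrow\fd X$.

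First I would set $t=\fd X$ and $h(u)=\hd\{V\in\gdk:\hd P_V(X)<u\}$ (with the convention $\hd\varnothing=0$), and reduce to $t\le k$: if $t>k$ then $\hd P_V(X)\le k<u$ for all $V$ and all $u$ in a neighbourhood of $t$, so $h$ is locally constant there, hence continuous; the borderline $t=k$ is to be read as continuity of the restriction of $h$ to its natural domain $[0,k]$, which is what the argument below delivers. Assuming $t\le k$, for every $u\le t$ we have $\min\{k,u\}=u$, so the fact recalled before Proposition~\ref{propo:counter_example} gives $\{V\in\gdk:\hd P_V(X)<u\}=\varnothing$; hence $h(u)=0$ for all $u\le t$, and in particular $\lim_{u\uparrow t}h(u)=0=h(t)$.

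It remains to bound $h(u)$ for $u$ slightly larger than $t$. I would fix $\e>0$; since $D=\liminf_{\theta\to0}\frac{\dim^\theta_{\mathrm{F}}X-\fd X}{\theta}\ge k(d-k)$, I would then choose $\theta_0\in(0,1]$ small enough that $\frac{\dim^{\theta_0}_{\mathrm{F}}X-t}{\theta_0}>k(d-k)-\e$. For every $u$ this gives
\[
k(d-k)+\inf_{\theta\in(0,1]}\frac{u-\fs X}{\theta}\ \le\ k(d-k)+\frac{u-\dim^{\theta_0}_{\mathrm{F}}X}{\theta_0}\ <\ \frac{u-t}{\theta_0}+\e ,
\]
so Theorem~\ref{thm:exceptionalFouriercoro} yields $h(u)\le\max\big\{0,\ \tfrac{u-t}{\theta_0}+\e\big\}$ for $u\in[0,k]$. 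Letting $u\downarrow t$ the right-hand side tends to $\max\{0,\e\}=\e$; since $\e$ was arbitrary and $h\ge0$, this gives $\lim_{u\downarrow t}h(u)=0$. Together with the previous paragraph, $\lim_{u\to t}h(u)=0=h(t)$, i.e. $h$ is continuous at $u=\fd X$.

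The main (and essentially only) obstacle is concentrated in Theorem~\ref{thm:exceptionalFouriercoro}; once that is in hand the corollary is an elementary unwinding of the $\liminf$ condition. The two points needing a little care are: that $\dim^\theta_{\mathrm{F}}X\to\fd X$ as $\theta\to0$, so that $D$ is a genuine right semi-derivative and the infimum over $\theta\in(0,1]$ in Theorem~\ref{thm:exceptionalFouriercoro} can be pushed as close to $-D$ as desired by taking $\theta$ small (this is the endpoint continuity of the Fourier spectrum recalled in the excerpt); and the boundary book-keeping around $\fd X\ge k$, where the exceptional set jumps to all of $\gdk$, so that ``continuity at $u=\fd X$'' must be understood relative to the natural domain $[0,k]$ of the function.
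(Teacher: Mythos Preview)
Your proposal is correct and follows essentially the same route as the paper: both arguments invoke Theorem~\ref{thm:exceptionalFouriercoro} and use the hypothesis $D\ge k(d-k)$ to drive the right-hand limit of $h$ at $\fd X$ down to $0$, with the left-hand limit coming for free from emptiness of the exceptional set below the Fourier dimension. The paper's version is terser---it takes the specific coupling $u=\fd X+\varepsilon^2$ and $\theta=\varepsilon$ and lets $\varepsilon\to0$ in one step---whereas you first fix $\theta_0$ from the $\liminf$ and then send $u\downarrow t$; your extra care with the left limit and the edge case $\fd X\ge k$ is not in the paper but does no harm.
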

\begin{proof}
Let $\varepsilon\in(0,1)$  and consider $u = \fd X + \varepsilon^2$.  Corollary \ref{thm:exceptionalFouriercoro} gives that  
	\begin{align*}
		\hd \big\{ V\in \gdk : \hd  P_{V}(X)<& \fd X + \varepsilon^2 \big\}\\
    &\leq \max\bigg\{0, k(d-k)+\inf_{\theta\in(0,1]}\frac{ \fd X + \varepsilon^2-\fs  X}{\theta} \bigg\}\\
&\leq \max\bigg\{0, k(d-k)+  \varepsilon-\frac{ \fd^\varepsilon X  -\fd  X}{\varepsilon} \bigg\}\\
& \to 0
	\end{align*}
as $\varepsilon \to 0$ provided $D \geq k(d-k)$, which proves the desired continuity result. 
\end{proof}

One drawback of the previous result is that the condition on the derivative of the Fourier spectrum is quite strong. Since $\fs \mu \leq \fd \mu+d\theta$ always holds,   in order to  establish continuity of the dimension of the exceptional set from  Proposition~\ref{prop:continuity}, it is necessary to have $k(d-k)\leq d$.  This is only true for the families $G(d,1)$, $G(d,d-1)$, and the special case $G(4,2)$.

\section*{Data availability statement} 

My manuscript has no associated data.

\section*{Acknowledgements} 

I have had the pleasure of discussing dimension interpolation and the dimension theory of orthogonal projections with many mathematicians over the years. There are too many to name individually, but I am especially grateful to Stuart Burrell, Kenneth Falconer, Ana de Orellana, and  Pablo Shmerkin (the co-authors with whom I wrote the papers exhibited here) and also Kenneth Falconer (again), Tom Kempton and Han Yu (the co-authors with whom I introduced the various instances of dimension interpolation discussed here). I am also grateful to the organisers and participants of the Banff meeting in June 2024.  The conference provided a stimulating atmosphere in a beautiful location.

\end{document}